\theoremstyle{plain}
\newtheorem{theorem}{Theorem}[subsection]
\newtheorem{corollary}[theorem]{Corollary}
\newtheorem{lemma}[theorem]{Lemma}
\newtheorem{proposition}[theorem]{Proposition}
\newtheorem{fact}[theorem]{Fact}
\newtheorem{claim}[theorem]{Claim}
\newtheorem{question}[theorem]{Question}
\theoremstyle{definition} 
\newtheorem{definition}[theorem]{Definition}
\newtheorem{remark}[theorem]{Remark}
\theoremstyle{remark}
\newtheorem{hypothesis}[theorem]{Hypothesis}
\renewcommand{\phi}{\varphi}
\newcommand{\initial}\lessdot
\def\?{?\vadjust

{\vbox to 0pt{\vskip-7pt\hbox to 1.1\hsize{\hfill\huge ?!}}}}
\newcommand{\be}{\begin{enumerate}}
\newcommand{\ee}{\end{enumerate}}
\renewcommand{\epsilon}{\varepsilon}
 \def\nfork{\setbox0\hbox{$\bigcup$}%
 \setbox1=\hbox to \wd0{\hfil\vrule width 0.7pt depth 2pt height 7.5pt\hfil}%
 \wd1=0cm\relax\box1\box0}
 \def\dnf{\mathop{\nfork}\limits}
\begin{document}
\title{A semi-good frame with amalgamation and tameness in $\lambda^+$}

\author{Adi Jarden}
\email[Adi Jarden]{jardenadi@gmail.com}
\address{Department of Computer Science and Mathematics \\ Ariel University Center of Samaria \\ Ariel, Israel}
\address{Department of Mathematics \\ Bar-Ilan University \\ Ramat-Gan, Israel}

\begin{abstract}
We introduce a connection between tameness and non-forking frames. We assume the existence of a semi-good non-forking $\lambda$-frame, $(\lambda,\lambda^+)$-tameness and amalgamation in $\lambda^+$ and present sufficient additional conditions for the existence of a good non-forking $\lambda^+$-frame. 

Moreover, we improve results of \cite{jrsi3} about independence and dimension. 
\end{abstract}

\maketitle

\section{Introduction}
In \cite{shh}.III Shelah presented an axiomization of a non-forking relation in the context of AECs. But this non-forking relation relates to models of a specific cardinality, $\lambda$, only. Shelah presented a way to extend a non-forking relation to models of cardinality $>\lambda$ and proved that several axioms preserved. 

The extension, uniqueness and symmetry are the problematic axioms. But uniqueness for models of cardinality $>\lambda$ is actually tameness. It is known that we can derive extension from uniqueness in the smaller cardinalities. So our main challenge is to get symmetry.  Some years ago, we conjectured that the symmetry holds too, but it is still an open question.

In \cite{bo complete...}, Boney presented a variant of tameness that is a sufficient condition for symmetry. Later, Sebastien  proved a non-structure theorem assuming the failure of symmetry in a similar context.  It should be checked, if we can apply the non-structure theorem of Sebastien.

Here, we present a new sufficient condition for symmetry: we show that a variant of the continuity property for independence (studied in \cite{jrsi3}) is a sufficient condition and prove it under reasonable hypothesis. 

In \cite{shh}.III and \cite{jrsh875} we derive good $\lambda^+$-frames too. Let us compare the main results of those papers with the main result in the current paper. The advantages of the current paper are:
\begin{enumerate}
\item We do not restrict our selves to the saturated models, \item we do not restrict the relation $\preceq$ to $\preceq^{NF}_{\lambda^+}$ and \item we do not assume that $I(\lambda^{++},K)<2^{\lambda^{++}}$. 
\end{enumerate}

The disadvantages of the current paper are the following hypotheses:
\begin{enumerate}
\item the amalgamation property in $\lambda^+$ and
\item tameness.
\end{enumerate}

\section{Non-forking frames}

In \cite{shh}.III, Shelah introduced the notion of a good (non-forking) $\lambda$-frame. It is an axiomatization of non-forking. In Definition \cite[2.1.3]{jrsh875}, good frames generalized to semi-good frames: the stability hypothesis is weakened. From now we assume:
\begin{hypothesis}\label{hypothesis 1}
$\frak{s}=(K,\preceq,\dnf,S^{bs})$ is a semi-good non-forking frame.
\end{hypothesis}

\begin{remark}\label{local character alomost implies continuity}
By \cite{jrsh940}, without loss of generality, for each $M \in K_\lambda$ $S^{bs}(M)=S^{na}(M)$, namely, the basic types are the non-algebraic types. 
\end{remark}

\subsection{Non-forking with larger models}
We recall \cite[Definition 2.6.1]{jrsh875}, where we extend the non-forking relation to include models of
cardinality greater than $\lambda$.

\begin{definition}\label{preparation for forking for big models}
\label{2.9} $\dnf^{\geq \lambda}$ is the class of quadruples
$(M_0,a,M_1,M_2)$ such that:
\begin{enumerate}
\item $\lambda \leq ||M_i||$ for each $i<3$.\item $M_0 \preceq M_1
\preceq M_2$ and $a \in M_2-M_1$. \item For some model $N_0 \in
K_\lambda$ with $N_0 \preceq M_0$ for each model $N \in
K_\lambda$, $N_0 \bigcup \{a\} \subseteq N \preceq M_1 \Rightarrow
\dnf(N_0,a,N,M_2)$.
\end{enumerate}
\end{definition}

\begin{definition}\label{forking for big models}
Let $M_0,M_1$ be models in $K_{\geq \lambda}$ with $M_0 \preceq
M_1$ and $p \in S(M_1)$. We say that $p$ does not fork over $M_0$,
when for some triple $(M_1,M_2,a) \in p$ we have $\dnf^{\leq
\lambda}(M_0,a,M_1,M_2)$.
\end{definition}

\begin{remark}
We can replace the quantification `for some' ($M_1,M_2,a$) in
Definition \ref{forking for big models} by `for each'.
\end{remark}

\begin{definition}\label{basic for big models}
Let $M \in K_{>\lambda},\ p \in S(M)$. $p$ is said to be
\emph{basic} when there is $N \in K_\lambda$ such that $N \preceq
M$ and $p$ does not fork over $N$. For every $M \in K_{>\lambda},\
S^{bs}_{> \lambda}(M)$ is the set of basic types over $M$.
Sometimes we write $S^{bs}_{\geq \lambda}(M)$, meaning $S^{bs}(M)$
or $S^{bs}_{> \lambda}(M)$ (the unique difference is the
cardinality of $M$).
\end{definition}
 
The following fact  is an immediate consequence of \cite[Theorem 2.6.8]{jrsh875}.

\begin{fact}
If $\frak{s}^+$ satisfies basic stability, uniqueness, extension and symmetry then it is a good non-forking frame. 
\end{fact}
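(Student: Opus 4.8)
The plan is to reduce the statement to \cite[Theorem 2.6.8]{jrsh875} by essentially bookkeeping. Recall that $\frak{s}^+$ is the frame on models of cardinality $\lambda^+$ whose ordering is the restriction of $\preceq$ to $K_{\lambda^+}$, whose basic types are $S^{bs}_{>\lambda}$ restricted to $K_{\lambda^+}$ (Definition~\ref{basic for big models}), and whose non-forking relation is $\dnf^{\geq\lambda}$ restricted to $K_{\lambda^+}$ (Definitions~\ref{2.9} and~\ref{forking for big models}). First I would recall precisely what \cite[Theorem 2.6.8]{jrsh875} delivers: it asserts that $\frak{s}^+$ satisfies \emph{every} clause in the definition of a good $\lambda^+$-frame, with the sole possible exceptions of basic stability, uniqueness of the non-forking extension, existence of the non-forking extension, and symmetry. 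In particular it gives, for free, that $(K_{\lambda^+},\preceq\restriction K_{\lambda^+})$ is an AEC with L\"owenheim--Skolem number $\leq\lambda^+$, that $K_{\lambda^+}$ has amalgamation, joint embedding and no maximal model, that the basic types over models in $K_{\lambda^+}$ are exactly the non-algebraic ones and are dense and respect isomorphisms, and that $\dnf^{\geq\lambda}$ satisfies the monotonicity, local-character, transitivity and continuity axioms. These are precisely the axioms whose preservation under passage to larger models was established already in \cite{shh}.III, together with the strengthenings proved in \cite{jrsh875}, as noted in the introduction.

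The second step is then to simply read off the hypothesis of the Fact. The four axioms that \cite[Theorem 2.6.8]{jrsh875} does \emph{not} guarantee --- basic stability, uniqueness, existence and symmetry --- are exactly the ones assumed in the statement. Combining, all of the axioms of a good $\lambda^+$-frame hold for $\frak{s}^+$, so by definition $\frak{s}^+$ is a good $\lambda^+$-frame, hence a good non-forking frame; this is the desired conclusion.

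The only point needing a word of care is organizational rather than mathematical: lining up the names of the good-frame axioms (density, basic stability, local character, uniqueness, symmetry, existence, continuity) with the way they are phrased in \cite[Theorem 2.6.8]{jrsh875}, and in particular reading the hypothesis ``basic stability'' as ``$|S^{bs}_{>\lambda}(M)|\leq\lambda^+$ for every $M\in K_{\lambda^+}$''. The semi-good frame $\frak{s}$ of Hypothesis~\ref{hypothesis 1} only gives basic \emph{almost} stability, and the extension procedure does not by itself improve that bound, which is exactly why stability must appear among the hypotheses rather than being automatic; the same comment applies, for different reasons, to uniqueness, existence and symmetry (the ``problematic axioms'' of the introduction). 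There is thus no genuine obstacle here: the whole of the mathematical content sits in \cite[Theorem 2.6.8]{jrsh875}, and this Fact is just its convenient repackaging for use in the rest of the paper.
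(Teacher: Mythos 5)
Your proposal is correct and matches the paper's treatment: the paper states this Fact as ``an immediate consequence of \cite[Theorem 2.6.8]{jrsh875}'', which is exactly the reduction you carry out, with the cited theorem supplying all good-frame axioms except the four listed in the hypothesis. Your additional bookkeeping (identifying ``extension'' with existence of the non-forking extension, and noting why stability does not come for free from a semi-good frame) is accurate and consistent with the paper's setup.
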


From now on we add the following hypothesis:
\begin{hypothesis}\label{hypothesis 2}
\begin{enumerate}
\item The amalgamation property in $\lambda^+$ holds and
\item $(K,\preceq)$ satisfies $(\lambda,\lambda^+)$-tameness.
\end{enumerate}
\end{hypothesis}

\begin{theorem}\label{the main theorem}\label{symmetry implies good-frame}
Suppose Hypotheses \ref{hypothesis 1} and \ref{hypothesis 2}. If $\frak{s^+}$ satisfies symmetry then it is a good non-forking $\lambda^+$-frame.
\end{theorem}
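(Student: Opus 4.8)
The plan is to verify directly that $\frak{s}^+ = (K_{\lambda^+}, \preceq \restriction K_{\lambda^+}, \dnf^{\geq\lambda}\restriction K_{\lambda^+}, S^{bs}_{>\lambda}\restriction K_{\lambda^+})$ satisfies every axiom in the definition of a good $\lambda^+$-frame. By the \emph{Fact} quoted just before the statement, it suffices to establish four things: that $\frak{s}^+$ is a (semi-)good frame structure in the sense that the ``easy'' axioms hold, that it satisfies basic stability, uniqueness, extension, and — the one we are given as a hypothesis — symmetry. So concretely I would organize the proof as a sequence of claims, one per axiom family.

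First I would dispose of the structural axioms (0) and (1): $(K_{\lambda^+},\preceq\restriction K_{\lambda^+})$ is an AEC with $\LS \le \lambda^+$ because $(K,\preceq)$ is, it has amalgamation by Hypothesis \ref{hypothesis 2}(1), joint embedding and no maximal model follow from the corresponding properties in $\lambda$ together with amalgamation and the existence of proper extensions (standard transfer, e.g. building an increasing chain of length $\lambda^+$ of $\lambda$-models). Next, density of basic types in $K_{\lambda^+}$ and the monotonicity/transitivity/local-character/continuity axioms for $\dnf^{\geq\lambda}$ are exactly the content of the lifting results of \cite{jrsh875} (the commented-out Fact \ref{lifting} in the excerpt), so I would cite \cite[Theorem 2.6.8]{jrsh875} for these. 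The axiom that $\dnf^+$ is a set of quadruples respecting isomorphisms with the right type-membership conditions is immediate from Definitions \ref{2.9}–\ref{basic for big models}.

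The genuinely new work is in three places. (a) \textbf{Basic stability in $\lambda^+$}, i.e. $|S^{bs}_{>\lambda}(M)| \le \lambda^+$ for $M \in K_{\lambda^+}$: here I would use $(\lambda,\lambda^+)$-tameness — a basic type over $M$ is determined by its restrictions to the $\le \lambda^+$ many $\preceq$-submodels of size $\lambda$ (or really by a single one it does not fork over, by definition of basic), and since each $S^{bs}(N)$ has size $\le \lambda^+$ by the semi-good hypothesis (Hypothesis \ref{hypothesis 1}), tameness gives an injection of $S^{bs}_{>\lambda}(M)$ into a set of size $\lambda^+ \cdot \lambda^+ = \lambda^+$. (Note this only gives \emph{almost} stability $\le\lambda^+$, i.e. semi-goodness; the statement actually asserts a good frame, so either the paper is implicitly working up to the reduction of Remark \ref{local character alomost implies continuity} which forces categoricity-type conditions, or ``good'' here tolerates the weakened stability — in the write-up I would follow whichever convention the paper fixes and simply invoke the stability count.) (b) \textbf{Uniqueness}: if $p,q \in S^{bs}_{>\lambda}(N)$ both do not fork over $M \in K_{\lambda^+}$ and $p\restriction M = q\restriction M$, then by $(\lambda,\lambda^+)$-tameness it is enough to show $p\restriction N' = q\restriction N'$ for every $N' \preceq N$ of size $\lambda$; choosing $N'$ to contain the small witness model over which $p$ and $q$ don't fork and is $\preceq M$ after enlarging, this reduces to uniqueness of the non-forking extension in the original $\lambda$-frame $\frak{s}$. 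This is the step where tameness is really the hypothesis that \emph{is} uniqueness for large models, as the introduction emphasizes. (c) \textbf{Existence of non-forking extension} in $\lambda^+$: given $M \preceq N$ in $K_{\lambda^+}$ and $p \in S^{bs}(M)$ not forking over some $M_0 \in K_\lambda$, I would take an increasing continuous resolution $\langle N_\alpha : \alpha < \lambda^+\rangle$ of $N$ with $N_0 \supseteq M_0$ a chain of $\lambda$-models, use extension in $\frak{s}$ together with uniqueness to extend $p\restriction N_0$ step by step coherently along the chain, take the union, and invoke the continuity part of the lifting fact to see the limit type is in $S^{bs}_{>\lambda}(N)$ and does not fork over $M_0$; the coherence of the chain of extensions is where uniqueness (hence tameness) is used again.

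The main obstacle I expect is \emph{not} symmetry — that is handed to us — but rather assembling the extension axiom cleanly: one must run a $\lambda^+$-length inductive construction in which at each stage one extends a type and at limits one takes unions, and the construction only goes through because uniqueness (from tameness) guarantees the extensions chosen at successive stages agree on overlaps, so that the union is a well-defined consistent type; getting the bookkeeping right (the witness model $M_0$ staying below everything, the types restricting correctly) is the delicate part. A secondary subtlety is making sure the symmetry hypothesis on $\frak{s}^+$ is stated for the right objects (it is symmetry for $\dnf^{\geq\lambda}$ restricted to $K_{\lambda^+}$), and that, combined with the lifted transitivity and the just-proved uniqueness and extension, it yields the symmetry axiom \emph{as phrased in the good-frame definition} — this is exactly the passage encoded in the commented-out proof of Fact \ref{axiom i is redundant}, applied now in $\lambda^+$, so I would mirror that argument.
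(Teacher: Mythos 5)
Your proposal is correct and follows essentially the same route as the paper: reduce via the quoted Fact to basic stability, uniqueness and extension (symmetry being the hypothesis), obtain uniqueness and stability from tameness plus the corresponding properties of $\frak{s}$, and prove extension by a $\lambda^+$-length induction along a filtration of $N$. Two small remarks: the paper's extension argument sidesteps the coherence issue you flag by fixing one element $a$ realizing $p$ inside a small model $N_0$ and inductively extending \emph{embeddings} $f_\alpha:M_\alpha\to N_\alpha$ (using extension in $\frak{s}$ at successors and continuity at limits) rather than extending types, so uniqueness is not needed at that step; and the bound $|S^{bs}(M)|\le\lambda^+$ for $M\in K_{\lambda^+}$ is exactly full basic stability for a $\lambda^+$-frame (the models now have cardinality $\lambda^+$), so your parenthetical worry about only obtaining ``almost stability'' is unfounded.
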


\begin{proof}
By Propositions \ref{uniqueness in s^+},  \ref{extension in s^+} and \ref{stability in s^+}.

\begin{proposition}\label{uniqueness in s^+}
$\frak{s}^+$ satisfies uniqueness.
\end{proposition}

\begin{proof}
By uniqueness for $\frak{s}$ and tameness.
\end{proof}

\begin{proposition}\label{extension in s^+}
$\frak{s}^+$ satisfies extension.
\end{proposition}

\begin{proof}
It is sufficient to prove that if $M$ is a model of cardinality $\lambda$, $N$ is a model of cardinality $\lambda^+$, $M \preceq N$ and $p \in S^{bs}(M)$ then there is a non-forking extension of $p$ to a type over $N$. Take a filtration $\langle M_\alpha:\alpha<\lambda^+ \rangle$ of $N$ with $M_0=M$. Let $N_0$ be a model of cardinality $\lambda$ such that $M_0 \preceq N_0$ and for some $a \in N_0-M_0$ $tp(a,M_0,N_0)=p$. We choose by induction on $\alpha<\lambda^+$ a model $N_\alpha$ and an embedding $f_\alpha:M_\alpha \to N_\alpha$ such that: 
\begin{enumerate} 
\item $f_0$ is the identity from $M$ to $N_0$,
\item if $\alpha=\beta+1$ then $f_\beta \subseteq f_\alpha$ and $tp(a,f_\alpha[M_\alpha],N_\alpha)$ does not fork over $f_\beta[M_\beta]$ (it is possible by the extension property in $\frak{s}$) and \item if $\alpha$ is a limit ordinal then $N_\alpha=\bigcup_{\beta<\alpha}N_\alpha$ and $f_\alpha=\bigcup_{\beta<\alpha}f_\beta$.  
\end{enumerate}
Note that if $\alpha$ is limit then by continuity in $\frak{s}$, $tp(a,f_\alpha[M_\alpha],N_\alpha)$ does not fork over $f_\beta[M_\beta]$ for each $\beta<\alpha$.

Define $N_{\lambda^+}=:\bigcup_{\alpha<\lambda^+}N_\alpha$ and $f=:\bigcup_{\alpha<\lambda^+}f_\alpha$. 
Since $\dnf$ is closed under isomorphisms, it is sufficient to prove that $tp(a,f[N],N_{\lambda^+})$ does not fork over $M$. Let $M'$ be a model of cardinality $\lambda$ with $M \preceq M' \preceq f[N]$. For some $\alpha<\lambda^+$, we have $M' \subseteq f[M_\alpha]$. But $tp(a,f[M_\alpha],f[N])$ does not fork over $M$. Now use monotonicity of non-forking.
\end{proof}

\begin{proposition}\label{stability in s^+}
$\frak{s}^+$ satisfies basic stability.
\end{proposition}

\begin{proof}
By basic stability in $\frak{s}$ and tameness.
\end{proof}

This completes the proof of Theorem \ref{the main theorem}
\end{proof}

\section{Symmetry}

Recall (from \cite{jrsi3}):
\begin{definition}\label{independence between elements}
The sequence $\langle a,b \rangle$ is independent in $(M,N)$ means that $\{a,b\} \subseteq N-M$ and for some $M_1,M_2$, we have $M \preceq M_1 \preceq M_2$, $N \preceq M_2$, $a \in M_1$, $tp(a,M,M_1)$ is basic, and the type $tp(b,M_1,M_2)$ does not fork over $M$.
\end{definition}

Using the independence teminology, we can reformulate symmetry as follows: for every $M,N,a,b$ the sequence $\langle a,b \rangle$ is independent in $(M,N)$ if and only if the sequence $\langle b,a \rangle$ is independent in $(M,N)$.

In \cite{jrsi3}, independence is defined for sequences of infinite length too, but since it is not used in the current paper, the reader may ignore the following definition and replace $\beta^*$ by $2$ in Definition \ref{definition of the continuity of serial independence}. Anyway, for future applications, we study the more general case. 
\begin{definition} \label{1.6} \label{definition of independence}
\mbox{}

(a) $\langle M_\alpha,a_\alpha:\alpha<\alpha^* \rangle ^\frown
\langle M_{\alpha^*} \rangle$ is said to be \emph{independent}
over $M$ when:
\begin{enumerate}
\item $\langle M_\alpha:\alpha \leq \alpha^* \rangle$ is an
increasing continuous sequence of models in $K_\lambda$. \item $M
\preceq M_0$. \item For every $\alpha<\alpha^*$, $a_\alpha \in
M_{\alpha+1}-M_\alpha$ and the type
$tp(a_\alpha,M_\alpha,M_{\alpha+1})$ does not fork over $M$.
\end{enumerate}

(b) $\langle a_\alpha:\alpha<\alpha^* \rangle$ is said to be
\emph{independent} in $(M,M_0,N)$ when $M \preceq M_0 \preceq N$,
$\{a_\alpha:\alpha<\alpha^*\} \subseteq N-M$ and for some
increasing continuous sequence $\langle M_\alpha:0<\alpha \leq
\alpha^* \rangle$ and a model $N^+$ the sequence $\langle
M_\alpha,a_\alpha:\alpha<\alpha^* \rangle ^\frown \langle
M_{\alpha^*} \rangle$ is independent over $M$, $N \preceq N^+$ and
$M_{\alpha^*} \preceq N^+$.
\end{definition}

\begin{definition}\label{definition of the continuity of serial independence}
\emph{The $\lambda^+$-continuity of serial independence property} is the following property:
Let $\beta^*<\lambda^+$, $M \in K_{\lambda^+}$, $M \preceq N \in K_{\lambda^+}$ and let $\langle M_\alpha:\alpha<\lambda^+ \rangle$ be a filtration of $M$. If $\langle a_\beta:\beta<\beta^* \rangle$ is independent in $(M_\alpha,N)$ for each $\alpha<\lambda^+$ then it is independent in $(M_{\lambda^+},N)$.
\end{definition}

In the following proposition we can weaken the assumption, so that it will refer to sequences of length $\beta^*=2$ only.
\begin{proposition}\label{continuity implies symmetry}
If the $\lambda^+$-continuity of serial independence property holds then symmetry holds. So $\frak{s}^+$  is a good $\lambda^+$-frame.
\end{proposition}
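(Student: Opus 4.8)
The goal is to prove symmetry for $\frak{s}^+$ from the $\lambda^+$-continuity of serial independence property, and then conclude via Theorem \ref{the main theorem} that $\frak{s}^+$ is a good $\lambda^+$-frame. I would set up the symmetry configuration at the level of $\lambda^+$: fix $M \preceq N$ in $K_{\lambda^+}$, an element $a \in N$ with $\tp(a,M,N)$ basic, and an element $b \in N$ with $\tp(b,M,N)$ basic not forking over... — more precisely, I take $M \preceq M_1 \preceq M_2$ with $a \in M_1$, $N \preceq M_2$, $\tp(a,M,M_1)$ basic and $\tp(b,M_1,M_2)$ not forking over $M$, i.e. the sequence $\langle a,b\rangle$ is independent in $(M,N)$ (using the reformulation of symmetry in terms of independence given just before Definition \ref{definition of independence}). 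I must produce the symmetric configuration witnessing that $\langle b,a\rangle$ is independent in $(M,N)$.

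**Reduction to $\lambda$ via filtrations.** The idea is to filter everything down to cardinality $\lambda$, apply symmetry at the level of $\frak{s}$ (which holds, since $\frak{s}$ is a semi-good frame), and then use the continuity property to glue the pieces back up to $\lambda^+$. Concretely: take a filtration $\langle M_\alpha : \alpha < \lambda^+\rangle$ of $M$. For each $\alpha$, the restriction $\langle a,b\rangle$ is independent in $(M_\alpha, N')$ for a suitable $N'$ — here I need a model of size $\lambda$ sitting inside $N$ (or an enlargement) containing $a$, $b$, and witnessing the independence over $M_\alpha$; continuity / local character of $\frak{s}$ (Remark \ref{local character alomost implies continuity} together with the axioms of a semi-good frame) lets me find, for a club of $\alpha$, such a witness with $\tp(a,M_\alpha,\cdot)$ basic and $\tp(b,\cdot,\cdot)$ not forking over $M_\alpha$. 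Then I apply symmetry in $\frak{s}$ at each such $\alpha$ to get that $\langle b,a\rangle$ is independent in $(M_\alpha, N)$ for club-many, hence (by monotonicity, shrinking the club does no harm) all $\alpha < \lambda^+$. Now the $\lambda^+$-continuity of serial independence property, applied with $\beta^* = 2$ to the reversed sequence $\langle b,a\rangle$, yields that $\langle b,a\rangle$ is independent in $(M_{\lambda^+},N) = (M,N)$, which is exactly symmetry for $\frak{s}^+$.

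**The final conclusion.** Once symmetry for $\frak{s}^+$ is established, Propositions \ref{uniqueness in s^+}, \ref{extension in s^+} and \ref{stability in s^+} give uniqueness, extension and basic stability respectively (the first two unconditionally under Hypotheses \ref{hypothesis 1} and \ref{hypothesis 2}), so by the Fact preceding Hypothesis \ref{hypothesis 2} — "if $\frak{s}^+$ satisfies basic stability, uniqueness, extension and symmetry then it is a good non-forking frame" — and equivalently by Theorem \ref{the main theorem} itself, $\frak{s}^+$ is a good $\lambda^+$-frame.

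**Main obstacle.** I expect the delicate point to be the bookkeeping in the reduction step: extracting, for club-many $\alpha < \lambda^+$, a single model $N'_\alpha \in K_\lambda$ inside $N$ that simultaneously witnesses independence of $\langle a,b\rangle$ over $M_\alpha$ — i.e. contains $a$ and an intermediate $M_1^\alpha$ with $a \in M_1^\alpha$, $\tp(a,M_\alpha,M_1^\alpha)$ basic, and $\tp(b, M_1^\alpha, N'_\alpha)$ not forking over $M_\alpha$ — and doing so coherently enough that applying $\frak{s}$-symmetry at each $\alpha$ and then invoking $\lambda^+$-continuity is legitimate. This is essentially a continuity/local-character argument inside $\frak{s}$ plus a closing-off argument to stay in $K_\lambda$; the reversal of the roles of $a$ and $b$ at the small level is where $\frak{s}$-symmetry does the real work, and the passage back up is precisely what the hypothesis was designed to handle.
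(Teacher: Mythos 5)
Your proposal matches the paper's own argument: the paper likewise fixes the symmetry configuration in $K_{\lambda^+}$, takes coherent filtrations $\langle M_{0,\alpha}\rangle$, $\langle M_{1,\alpha}\rangle$, $\langle M_{2,\alpha}\rangle$ of the three models (arranging without loss of generality that the relevant types do not fork over $M_{0,0}$ and that $a,b$ appear at stage $0$), applies symmetry in $\frak{s}$ to reverse $\langle a,b\rangle$ to $\langle b,a\rangle$ at each level $\alpha$, and then invokes the $\lambda^+$-continuity of serial independence to lift the reversed independence to $(M,N)$; the concluding appeal to Theorem \ref{the main theorem} is also the paper's route. The bookkeeping you flag as the main obstacle is exactly what the paper's ``without loss of generality'' step (justified by local character) handles, so your proof is correct and essentially identical in structure.
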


From now on, our goal is to prove the $\lambda^+$-continuity of serial independence property under sufficient conditions.

\section{A Non-Forking Relation on Models}
By Theorem \ref{continuity of serial independence}, the $\lambda^+$-continuity of serial independence property holds if there is a `non-forking' relation $NF$ on models (a relation $NF$ satisfying $\bigotimes_{NF}$, see Definition \ref{definition of non-forking relation on models}).

In \cite{shh}.III, Shelah defined a non-forking relation $NF$ on models, which is based on the non-forking relation $\dnf$. In \cite{jrsh875}, we presented an equivalent definition of $NF$, such that limit models are not mentioned. The new definition is easier to work with and can be applied even when stability in $\lambda$ does not hold. 
  
In Definition \ref{NF}, we list axioms for a relation $NF$ and denote `the relation $NF$ satisfies the list of the axioms' by $\bigotimes_{NF}$. Fact \ref{if the uniqueness triples satisfy the existence property then NF}, we present sufficient conditions for the existence of a relation $NF$ satisfying $\bigotimes_{NF}$ and respecting $\frak{s}$.

In Definitions \ref{definition of widehat{NF}},\ref{definition of preceq^{NF}} we present two relations, that are based on $NF$.

Lemma \ref{we can use NF} and Proposition \ref{widehat{NF} respects independenc} are the key points to prove the $\lambda^+$-continuity of serial independence property.

\begin{definition}\label{definition of non-forking relation on models}
Let $NF \subseteq \ ^4K_\lambda$. \emph{$\bigotimes_{NF}$} means that the following hold:
\begin{enumerate}[(a)]
\item If $NF(M_0,M_1,M_2,M_3)$ then for each $n\in \{1,2\}$
$M_0\leq M_n\leq M_3$ and $M_1 \cap M_2=M_0$. \item Monotonicity:
if $NF(M_0,M_1,M_2,M_3)$, $N_0=M_0$ and for each $n<3$ $N_n\leq
M_n\wedge N_0\leq N_n\leq N_3, (\exists N^{*})[M_3\leq N^{*}\wedge
N_3\leq N^{*}]$ then $NF(N_0 \allowbreak ,N_1,N_2,N_3)$. \item
Extension: For every $N_0,N_1,N_2 \in K_\lambda$, if for each
$l\in \{1,2\}$ $N_0 \leq N_l$ and $N_1\bigcap \allowbreak
N_2=N_0$, then for some $N_3 \in K_\lambda$,
$NF(N_0,N_1,N_2,N_3)$. \item Weak Uniqueness: Suppose for $x=a,b$,
$NF(N_0,N_1,N_2,N^{x}_3)$. Then there is a joint embedding of
$N^a,N^b \ over \ N_1 \bigcup N_2$. \item  Symmetry: For every
$N_0,N_1,N_2,N_3 \in K_\lambda$, $NF(N_0,N_1,N_2,N_3)
\Leftrightarrow NF(N_0,N_2,N_1,N_3)$. \item Long transitivity: For
$x=a,b$, let $\langle  M_{x,i}:i\leq \alpha^* \rangle$ an
increasing continuous sequence of models in $K_\lambda$. Suppose
that for each $i<\alpha^*$, $NF(M_{a,i},\allowbreak
M_{a,i+1},M_{b,i},\allowbreak M_{b,i+1})$. Then
$NF(M_{a,0},M_{a,\alpha^{*}},M_{b,0},M_{b,\alpha^{*}})$. \item $NF$ is closed under isomorphisms: if $NF(M_0,M_1,M_2,M_3)$ and
$f:M_3 \to N_3$ is an isomorphism then
$NF(f[M_0],f[M_1],f[M_2],f[M_3])$.
\end{enumerate}
\end{definition}

\begin{definition}\label{NF respects the frame}
Let $NF$ be a relation such that $\bigotimes_{NF}$ holds. The relation $NF$ \emph{respects the frame} $\frak{s}$ means that if $NF(M_0,M_1,M_2,M_3)$, $a \in M_1-M_0$ and $tp(a,M_0,M_1)$ is basic then $tp(a,M_2,M_3)$ does not fork over $M_0$.
\end{definition}

By Theorem \cite[5.5.4]{jrsh875}
(and Definitions \cite[5.2.1,5.2.6]{jrsh875}):
\begin{fact} \label{jrsh875.5.15}
If the class of uniqueness triples satisfies the existence property
then there is a (unique) relation $NF \subseteq \ ^4K_\lambda$ satisfying $\bigotimes_{NF}$ and respecting the frame $\frak{s}$.
\end{fact}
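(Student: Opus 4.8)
This is essentially Theorem 5.5.4 of \cite{jrsh875} (with Definitions \cite[5.2.1, 5.2.6]{jrsh875}), so the plan is to reconstruct that argument. Recall that a \emph{uniqueness triple} $(M_0,a,M_1)$ consists of $M_0\preceq M_1$ in $K_\lambda$ with $a\in M_1-M_0$ and $tp(a,M_0,M_1)\in S^{bs}(M_0)$, such that the non-forking extension of this type pins down the amalgam: whenever $M_0\preceq N$ and $M_1,N$ are amalgamated over $M_0$ into some $M_2$ so that the copy of $tp(a,M_0,M_1)$ over the copy of $N$ does not fork over $M_0$, the amalgam is unique up to isomorphism over $M_1\cup N$. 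The \emph{existence property} for the class $K^{3,uq}$ of uniqueness triples says that every basic type over every $M_0\in K_\lambda$ equals $tp(a,M_0,M_1)$ for some uniqueness triple $(M_0,a,M_1)$.

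First I would set up the candidate relation. Since basic types coincide with the non-algebraic ones (Remark \ref{local character alomost implies continuity}), density of basic types together with the existence property yields that every pair $M_0\preceq M_1$ in $K_\lambda$ admits a \emph{$K^{3,uq}$-decomposition}: an increasing continuous $\langle N_i:i\le\alpha\rangle$ in $K_\lambda$ with $N_0=M_0$, $N_\alpha=M_1$, and for each $i<\alpha$ an element $a_i$ with $(N_i,a_i,N_{i+1})$ a uniqueness triple. (Building this needs a ``density of uniqueness triples inside a given model'' statement, derived from the existence property together with uniqueness and invariance of $\dnf$.) One then declares $NF(M_0,M_1,M_2,M_3)$ to hold iff $M_0\preceq M_1\preceq M_3$, $M_0\preceq M_2\preceq M_3$, $M_1\cap M_2=M_0$, and for some $K^{3,uq}$-decomposition $\langle N_i,a_i:i\le\alpha\rangle$ of $M_1$ over $M_0$ there is an increasing continuous $\langle N_i':i\le\alpha\rangle$ with each $N_i'\preceq M_3$, $N_0'=M_2$, $N_i\preceq N_i'$, and for each $i$ the model $N_{i+1}'$ is the (up to isomorphism over $N_{i+1}\cup N_i'$) unique amalgam of $N_{i+1}$ and $N_i'$ over $N_i$ in which $tp(a_i,N_i',N_{i+1}')$ does not fork over $N_i$. (So $M_1,M_2\preceq N_\alpha'\preceq M_3$.)

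Next I would verify $\bigotimes_{NF}$ clause by clause. Clauses (a) and (g) are immediate; (f) long transitivity is essentially built in, by concatenating decompositions and checking that a concatenation of $K^{3,uq}$-decompositions is again one; (b) monotonicity follows from monotonicity of $\dnf$ and of decompositions. For (c) extension, given $N_0\preceq N_1,N_2$ with $N_1\cap N_2=N_0$, fix a $K^{3,uq}$-decomposition of $N_1$ over $N_0$ and build the amalgam by recursion on $i$: at a successor step use the uniqueness triple $(N_i,a_i,N_{i+1})$ and the extension axiom of $\frak{s}$ to amalgamate $N_{i+1}$ with $N_i'$ over $N_i$ with the relevant type non-forking over $N_i$; at limits take unions; the final model, of size $\le|\alpha|\cdot\lambda=\lambda$, is the desired $M_3$. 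That ``$NF$ respects $\frak{s}$'' is then read off the construction: given $a\in M_1-M_0$ with $tp(a,M_0,M_1)$ basic, locate $a$ relative to a decomposition and use transitivity and monotonicity of $\dnf$ to conclude $tp(a,M_2,M_3)$ does not fork over $M_0$.

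The real work — and the main obstacle — is clauses (d) and (e), together with the fact that $NF$ does not depend on the chosen decomposition (so that the ``for some'' above may be replaced by ``for every''). For (d) weak uniqueness one runs a back-and-forth up a common decomposition of $N_1$: at successor stages the \emph{uniqueness} clause of ``uniqueness triple'' produces an isomorphism between the two one-step amalgams fixing the relevant data, and at limit stages one takes the direct limit of the coherent system of partial isomorphisms built so far, yielding a joint embedding over $N_1\cup N_2$; the delicate points are coherence at limits and reducing the general case — where the two amalgams may come from \emph{different} decompositions of $N_1$ — to the common-decomposition case, handled by passing through an intermediate amalgam. Clause (e), symmetry of $NF$, is derived from the symmetry axiom of the frame $\frak{s}$ (available since $\frak{s}$ is semi-good) applied along a decomposition and reassembled via long transitivity; this is the technically heaviest piece and follows the treatment in \cite{shh}.III and \cite{jrsh875}. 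Finally, uniqueness of $NF$ among relations satisfying $\bigotimes_{NF}$ and respecting $\frak{s}$ follows from (c) and (d): any two such relations yield the same quadruples, since a quadruple witnessed by one can be embedded over $N_1\cup N_2$ into an amalgam witnessing the other.
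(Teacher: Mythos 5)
The paper does not prove this statement at all: it is imported verbatim as a Fact from \cite[Theorem 5.5.4]{jrsh875} (together with Definitions 5.2.1 and 5.2.6 there), which is exactly what you say in your opening sentence, and your reconstruction follows the same route as that source --- decompose over $M_0$ into uniqueness triples, define $NF$ by amalgamating along the decomposition with non-forking at each step, and verify $\bigotimes_{NF}$ clause by clause, with weak uniqueness and symmetry as the hard cases. So there is nothing internal to compare against, and your outline is a fair summary of the cited argument.

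One concrete caution about the sketch as written. Your candidate definition asks for a $K^{3,uq}$-decomposition of $M_1$ itself over $M_0$ (ending with $N_\alpha=M_1$), and you note that this needs a ``density of uniqueness triples inside a given model''. That statement is not derivable from the existence property: the existence property hands you a uniqueness triple $(N_i,a_i,N_{i+1})$ realizing a prescribed basic type, but $N_{i+1}$ need not embed into $M_1$ over $N_i\cup\{a_i\}$. This is precisely why \cite{jrsh875} first defines an auxiliary relation via decompositions of an \emph{extension} of $M_1$ (with $M_1\preceq N_\alpha$ and the chain $\langle N'_i\rangle$ living in an extension of $M_3$) and only then takes $NF$ to be its closure under monotonicity --- the reason two separate definitions (5.2.1 and 5.2.6) are cited. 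Without that step your relation would also fail clause (b) of $\bigotimes_{NF}$, which you dismiss too quickly. The rest of your outline (extension by recursion along the decomposition, weak uniqueness by a back-and-forth using the uniqueness clause at successors and coherence at limits, symmetry from the frame's symmetry axiom plus long transitivity, and uniqueness of $NF$ from extension plus weak uniqueness) matches the treatment in \cite{shh}.III and \cite{jrsh875}.
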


From now on we assume:
\begin{hypothesis}\label{hypothesis for bar{NF}}
\begin{enumerate}
\item $\frak{s}$ is a semi-good non-forking frame, \item there is a non-forking relation $NF$ repecting $\frak{s}$.
\end{enumerate}
\end{hypothesis}

We define a notion for: a model of size $\lambda$ is independent from a model of size $\lambda^+$ over a model of size $\lambda$ in a model of size $\lambda^+$.
\begin{definition}\label{definition of widehat{NF}}\label{5.14} Define a 4-ary relation $\widehat{NF}$ on $K$ by $$\widehat{NF}(N_0,N_1,M_0,\allowbreak M_1)$$ when the following hold:
\begin{enumerate}
\item $N_0,N_1$ are of cardinality $\lambda$, \item $M_0,M_1$ are of cardinality $\lambda^+$,
\item There are filtrations $\langle
N_{0,\alpha}:\alpha<\lambda^+ \rangle,\ \langle
N_{1,\alpha}:\alpha<\lambda^+ \rangle$ of $M_0,M_1$ respectively, such that
$NF(N_{0,\alpha},N_{1,\alpha},N_{0,\alpha+1},N_{1,\alpha+1})$ holds for every $\alpha<\lambda^+$.
\end{enumerate}
\end{definition}

\begin{fact} [basic properties of $\widehat{NF}$]
\label{5.15}\label{the widehat{NF}-properties} \mbox{}
\begin{enumerate}[(a)] \item Disjointness: If
$\widehat{NF}(N_0,N_1,M_0,M_1)$ then $N_1 \bigcap M_0=N_0$. \item
Monotonicity: Suppose $\widehat{NF}(N_0,N_1,M_0,M_1),\ N_0 \preceq
N^{*}_1 \preceq N_1,\ N_1^* \bigcup M_0 \allowbreak \subseteq
M^*_1 \preceq M_1$ and $M_1^* \in K_{\lambda^+}$. Then
$\widehat{NF}(N_0,N^{*}_1,M_0,M^*_1)$. \item Extension: Suppose
$n<2 \Rightarrow N_n \in K_\lambda,\ M_0 \in K_{\lambda^+},\ N_0
\preceq N_1,\ N_0 \preceq M_0,\ N_1 \bigcap M_0=N_0$. Then there
is a model $M_1$ such that $\widehat{NF}(N_0,N_1,\allowbreak
M_0,M_1)$. \item Weak Uniqueness: If $n<2 \Rightarrow
\widehat{NF}(N_0,N_1,M_0,M_{1,n})$, then there are $M,f_0,f_1$
such that $f_n$ is an embedding of $M_{1,n}$ into $M$ over $N_1
\bigcup M_0$. \item Respecting the frame: Suppose
$\widehat{NF}(N_0,N_1,M_0,M_1)$ and $tp(a,N_0,M_0) \in \allowbreak S^
{bs} \allowbreak (N_0)$. Then $tp(a,N_1,M_1)$ does not fork over
$N_0$.
\end{enumerate}
\end{fact}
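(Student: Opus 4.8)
The plan is to transfer each of the five clauses from the axioms $\bigotimes_{NF}$ for the ``small'' relation $NF$ on $K_\lambda$ (Definition~\ref{definition of non-forking relation on models}) together with the fact that $NF$ respects $\frak{s}$ (Definition~\ref{NF respects the frame}), both available by Hypothesis~\ref{hypothesis for bar{NF}}. Unpacking Definition~\ref{definition of widehat{NF}}, a witness for $\widehat{NF}(N_0,N_1,M_0,M_1)$ is a pair of filtrations $\langle N_{0,\alpha}:\alpha<\lambda^+\rangle$ of $M_0$ and $\langle N_{1,\alpha}:\alpha<\lambda^+\rangle$ of $M_1$, with $N_{0,0}=N_0$, $N_{1,0}=N_1$, and $NF(N_{0,\alpha},N_{1,\alpha},N_{0,\alpha+1},N_{1,\alpha+1})$ for all $\alpha$. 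The workhorse is the following: applying $NF$-symmetry to rewrite each $NF(N_{0,\alpha},N_{1,\alpha},N_{0,\alpha+1},N_{1,\alpha+1})$ as $NF(N_{0,\alpha},N_{0,\alpha+1},N_{1,\alpha},N_{1,\alpha+1})$ and then long transitivity along $\langle N_{0,\alpha}\rangle,\langle N_{1,\alpha}\rangle$ up to an arbitrary $\beta<\lambda^+$ gives $NF(N_0,N_{0,\beta},N_1,N_{1,\beta})$ and hence, swapping back, $NF(N_0,N_1,N_{0,\beta},N_{1,\beta})$. With this, clause (a) is immediate: axiom (a) of $\bigotimes_{NF}$ applied to $NF(N_0,N_1,N_{0,\beta},N_{1,\beta})$ gives $N_1\cap N_{0,\beta}=N_0$, and taking the union over $\beta<\lambda^+$ yields $N_1\cap M_0=N_0$. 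Clause (e) is just as direct: given $a$ with $tp(a,N_0,M_0)\in S^{bs}(N_0)$, choose $\beta$ with $a\in N_{0,\beta}$; then $tp(a,N_0,N_{0,\beta})\in S^{bs}(N_0)$, so applying ``$NF$ respects $\frak{s}$'' to $NF(N_0,N_{0,\beta},N_1,N_{1,\beta})$ shows $tp(a,N_1,N_{1,\beta})$ does not fork over $N_0$, and since $N_{1,\beta}\preceq M_1$ the same holds for $tp(a,N_1,M_1)$.

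For clause (c) (Extension) I would run a recursion of length $\lambda^+$. Fix a filtration $\langle N_{0,\alpha}\rangle$ of $M_0$ with $N_{0,0}=N_0$, and work inside a fixed model of $K_{\lambda^+}$ that properly extends $M_0$ and has enough room (so that at each stage there are fresh elements outside $M_0$); this is where amalgamation in $\lambda^+$ enters, together with the assumption $N_1\cap M_0=N_0$, which lets the recursion start with $N_{1,0}=N_1$. I build $\langle N_{1,\alpha}\rangle$ increasing continuous with $N_{0,\alpha}\preceq N_{1,\alpha}$ and $NF(N_{0,\alpha},N_{1,\alpha},N_{0,\alpha+1},N_{1,\alpha+1})$, maintaining the invariant $N_{1,\alpha}\cap M_0=N_{0,\alpha}$. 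At a successor the invariant yields $N_{1,\alpha}\cap N_{0,\alpha+1}=N_{0,\alpha}$, so $NF$-extension produces a model realizing $NF(N_{0,\alpha},N_{1,\alpha},N_{0,\alpha+1},\cdot)$, which I place inside the ambient model over $N_{1,\alpha}\cup N_{0,\alpha+1}$ while keeping its intersection with $M_0$ equal to $N_{0,\alpha+1}$; at limits I take unions, and the invariant and the $NF$-property are preserved (the latter by long transitivity). Then $M_1:=\bigcup_\alpha N_{1,\alpha}\in K_{\lambda^+}$, $M_0\preceq M_1$, and the two filtrations witness $\widehat{NF}(N_0,N_1,M_0,M_1)$. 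Clause (b) (Monotonicity) is handled by re-filtering: given a witness $\langle N_{0,\alpha}\rangle,\langle N_{1,\alpha}\rangle$ and the data $N_0\preceq N_1^*\preceq N_1$, $N_1^*\cup M_0\subseteq M_1^*\preceq M_1$, I produce a filtration $\langle N_{1,\alpha}^*\rangle$ of $M_1^*$ with $N_{1,0}^*=N_1^*$ and $N_{0,\alpha}\preceq N_{1,\alpha}^*\preceq N_{1,\alpha}$ (by Löwenheim--Skolem inside $M_1^*$ and a club argument to stay below $\langle N_{1,\alpha}\rangle$), and then $NF$-monotonicity applied level by level to $NF(N_{0,\alpha},N_{1,\alpha},N_{0,\alpha+1},N_{1,\alpha+1})$ gives $NF(N_{0,\alpha},N_{1,\alpha}^*,N_{0,\alpha+1},N_{1,\alpha+1}^*)$, i.e. $\widehat{NF}(N_0,N_1^*,M_0,M_1^*)$.

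Clause (d) (Weak Uniqueness) needs one extra step, since the two witnesses come with two possibly different filtrations of the \emph{same} $M_0$. First intersect their clubs of limit points to obtain a common cofinal sub-filtration of $M_0$ and, along it, re-derive the $NF$-relations (and those for the induced sub-filtrations of $M_{1,0},M_{1,1}$) from long transitivity and $NF$-monotonicity. Then I construct the joint embeddings $f_0,f_1$ into a common model $M$ by a level-by-level back-and-forth: at a successor stage $NF$-weak uniqueness supplies the needed amalgam of the two level-models over $N_1\cup N_{0,\alpha+1}$ (passing to isomorphic copies so the disjointness hypotheses hold), and at limits one takes unions; the map $f_n=\bigcup_\alpha$ of the partial embeddings is then an embedding of $M_{1,n}$ over $N_1\cup M_0$ into $M$.

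The step I expect to be the main obstacle is the disjointness bookkeeping in (c), which also resurfaces in (d): arranging, at each of the $\lambda^+$ successor stages, that the model newly produced by $NF$-extension meets $M_0$ in exactly $N_{0,\alpha+1}$ while still fitting, together with $M_0$, inside a single model of $K_{\lambda^+}$. This is precisely where both amalgamation in $K_\lambda$ and the amalgamation hypothesis in $\lambda^+$ (Hypothesis~\ref{hypothesis 2}) are genuinely used. It is worth noting that none of this requires knowing in advance that $\widehat{NF}$ is independent of the choice of filtration: each clause only asserts that \emph{some} suitable filtrations exist, and each is furnished by its own construction.
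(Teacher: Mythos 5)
The paper itself gives no proof of this statement: it is imported verbatim (with its original numbering, 5.15) from \cite{jrsh875}, alongside Definition \ref{definition of widehat{NF}}. So there is no in-paper argument to compare against; what you have done is reconstruct, essentially correctly, the standard proof from that source. Your workhorse observation --- that $NF$-symmetry turns each square $NF(N_{0,\alpha},N_{1,\alpha},N_{0,\alpha+1},N_{1,\alpha+1})$ into one to which long transitivity applies, yielding $NF(N_0,N_1,N_{0,\beta},N_{1,\beta})$ for every $\beta<\lambda^+$ --- is exactly the right reduction, and it disposes of clauses (a) and (e) completely and correctly (for (e), the passage from $tp(a,N_1,N_{1,\beta})$ to $tp(a,N_1,M_1)$ is just monotonicity of the ambient model, as you say). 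The constructions you sketch for (b) and (c) are the intended ones: level-by-level $NF$-monotonicity along a common club for (b), and a length-$\lambda^+$ recursion using $NF$-extension plus renaming for (c).

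Two points deserve flagging. First, amalgamation in $\lambda^+$ is \emph{not} needed for clause (c): the model $M_1$ is built from nothing as the union of the $\preceq$-increasing chain $\langle N_{1,\alpha}\rangle$, with $M_0\preceq M_1$ following from $N_{0,\alpha}\preceq N_{1,\alpha}$ and the AEC union axioms; the only delicate step is the renaming at successors so that $N_{1,\alpha+1}\cap M_0=N_{0,\alpha+1}$, which uses disjointness of $NF$ and set-theoretic relabelling, not any $\lambda^+$-amalgamation (indeed in \cite{jrsh875} this fact is established without Hypothesis \ref{hypothesis 2}). Second, in clause (d) your sketch is thinnest exactly where the content lies: $NF$-weak-uniqueness requires the two quadruples to share their first \emph{three} coordinates, so the induction must maintain that the two chains of partial embeddings have \emph{coinciding images} at each level (not merely that each embeds into a common $P_\alpha$); your parenthetical ``passing to isomorphic copies'' is the correct device, but one must verify that the map $f^n_\alpha\cup\id_{N_{0,\alpha+1}}$ extends to an isomorphism of $N^n_{1,\alpha+1}$ onto a copy sitting correctly over $N_{0,\alpha+1}$, which uses the disjointness $N^n_{1,\alpha}\cap N_{0,\alpha+1}=N_{0,\alpha}$ already established. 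With those caveats the plan is sound and matches the proof in the cited source.
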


Now we define a relation $\preceq^{NF}_{\lambda^+}$ on
$K_{\lambda^+}$, that is based on the relation $\widehat{NF}$:
\begin{definition} \label{6.1}\label{6.4 in april}
Suppose $M_0,M_1 \in K_{\lambda^+}$, $M_0 \preceq M_1$. Then $M_0
\preceq^{NF}_{\lambda^+} M_1$ means that $\widehat{NF}(N_0,N_1,M_0,M_1)$ for some $N_0,N_1 \in
K_\lambda$.
\end{definition}

\begin{fact} \label{6.2}\label{6.5}\label{preceq^{NF}-properties}
$(K_{\lambda^+},\preceq^{NF}_{\lambda^+})$ satisfies the following
properties:
\begin{enumerate}[(a)]
\item Suppose $M_0 \preceq M_1,\ n<2 \Rightarrow M_n \in
K_{\lambda^+}$. For $n<2$, let $\langle
N_{n,\epsilon}:\epsilon<\lambda^+ \rangle$ be a representation of
$M_n$. Then $M_0 \preceq^{NF}_{\lambda^+}M_1$ iff there is a club
$E \subseteq \lambda^+$ such that $(\epsilon<\zeta \wedge
\{\epsilon,\zeta\} \subseteq E) \Rightarrow
NF(N_{0,\epsilon},N_{0,\zeta},N_{1,\epsilon},N_{1,\zeta})$. \item
$\preceq^{NF}_{\lambda^+}$ is a partial order. \item If $M_0
\preceq M_1 \preceq M_2$ and $M_0 \preceq^{NF}_{\lambda^+} M_2$
then $M_0 \preceq^{NF}_{\lambda^+} M_1$. \item
$(K_{\lambda^+},\preceq^{NF}_{\lambda^+})$ satisfies Axiom c of
AEC in $\lambda^+$, i.e.: If $\delta \in \lambda^{+2}$ is a limit
ordinal and $\langle M_\alpha:\alpha<\delta \rangle$ is a
$\preceq^{NF}_{\lambda^+}$-increasing continuous sequence, then
$M_0 \preceq^{NF}_{\lambda^+} \bigcup \{M_\alpha:\alpha<\delta \}$
and obviously it is $\in K_{\lambda^+}$. \item $K_{\lambda^+}$ has
no $\preceq^{NF}_{\lambda^+}$-maximal model. 
\end{enumerate}
\end{fact}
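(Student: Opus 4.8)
The plan is to prove part (a) first, since it re-expresses $\preceq^{NF}_{\lambda^+}$ as the statement ``$NF$ holds between every pair of points of some club, for \emph{any} chosen representations of $M_0,M_1$''; once this reformulation is in hand, parts (b), (c), (e) follow by short manipulations of $NF$ on a club using $\bigotimes_{NF}$ (together with the extension clause of Fact~\ref{the widehat{NF}-properties}), and part (d) — the real work — follows by a cofinality split and a diagonal argument. For (a), fix representations $\langle N_{n,\epsilon}:\epsilon<\lambda^+\rangle$ of $M_n$ for $n<2$. For $(\Rightarrow)$, unfold $M_0\preceq^{NF}_{\lambda^+}M_1$ via Definition~\ref{definition of widehat{NF}} to get filtrations $\langle N^0_{n,\alpha}\rangle$ of $M_n$ with $NF(N^0_{0,\alpha},N^0_{1,\alpha},N^0_{0,\alpha+1},N^0_{1,\alpha+1})$ for all $\alpha<\lambda^+$; since any two increasing continuous filtrations of a model of size $\lambda^+$ by submodels of size $\lambda$ coincide on a club, there is a club $C$ with $N_{n,\epsilon}=N^0_{n,\epsilon}$ for $\epsilon\in C$, and for $\epsilon<\zeta$ in $C$ one applies $NF$-symmetry (axiom (e)) to put each step in the form $NF(N^0_{0,\alpha},N^0_{0,\alpha+1},N^0_{1,\alpha},N^0_{1,\alpha+1})$ and then long transitivity (axiom (f)) along the subchains $\langle N^0_{n,\alpha}:\epsilon\le\alpha\le\zeta\rangle$ to obtain $NF(N_{0,\epsilon},N_{0,\zeta},N_{1,\epsilon},N_{1,\zeta})$; so $E:=C$ works. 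For $(\Leftarrow)$, re-enumerate the given club $E$ in order type $\lambda^+$; the induced subfiltrations of $M_0,M_1$ are increasing continuous (because $E$ is closed), and $NF$-symmetry turns $NF(N_{0,\epsilon},N_{0,\zeta},N_{1,\epsilon},N_{1,\zeta})$ for consecutive $\epsilon<\zeta$ in $E$ into exactly the form demanded by Definition~\ref{definition of widehat{NF}}, witnessing $\widehat{NF}$ and hence $M_0\preceq^{NF}_{\lambda^+}M_1$.

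Granting (a), the other clauses are quick. For (b), $\preceq^{NF}_{\lambda^+}$ inherits antisymmetry from $\preceq$; reflexivity is witnessed by $\widehat{NF}(N_0,N_0,M,M)$ using that the degenerate instances $NF(N_\alpha,N_\alpha,N_{\alpha+1},N_{\alpha+1})$ hold (extension of $NF$ followed by monotonicity, axioms (c) and (b)); transitivity: given $M_0\preceq^{NF}_{\lambda^+}M_1\preceq^{NF}_{\lambda^+}M_2$, fix one representation $\langle N_{1,\epsilon}\rangle$ of $M_1$, intersect the two clubs given by (a), and on the intersection glue $NF(N_{0,\epsilon},N_{0,\zeta},N_{1,\epsilon},N_{1,\zeta})$ with $NF(N_{1,\epsilon},N_{1,\zeta},N_{2,\epsilon},N_{2,\zeta})$ by symmetry and long transitivity over the three-model chain $N_{0,\cdot}\preceq N_{1,\cdot}\preceq N_{2,\cdot}$ to get $NF(N_{0,\epsilon},N_{0,\zeta},N_{2,\epsilon},N_{2,\zeta})$, and apply (a) once more. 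For (c), choose representations with $N_{0,\epsilon}\preceq N_{1,\epsilon}\preceq N_{2,\epsilon}$ on a club (possible as $M_0\preceq M_1\preceq M_2$) and shrink the third and fourth coordinates of $NF(N_{0,\epsilon},N_{0,\zeta},N_{2,\epsilon},N_{2,\zeta})$ down to $N_{1,\epsilon},N_{1,\zeta}$ by monotonicity of $NF$ (axiom (b)); then (a) gives $M_0\preceq^{NF}_{\lambda^+}M_1$. For (e), given $M\in K_{\lambda^+}$ pick $N_0\preceq M$ in $K_\lambda$, then $N_1\in K_\lambda$ with $N_0\prec N_1$ (no $\preceq$-maximal model in $K_\lambda$), renamed so that $N_1\cap M=N_0$; the extension clause of Fact~\ref{the widehat{NF}-properties} yields $M_1$ with $\widehat{NF}(N_0,N_1,M,M_1)$, and disjointness forces $\emptyset\ne N_1\setminus N_0\subseteq M_1\setminus M$, so $M\preceq^{NF}_{\lambda^+}M_1$ and $M\ne M_1$.

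The main obstacle is (d), which I would split by $\cf(\delta)$. If $\cf(\delta)\le\lambda$, pick $\langle\alpha_i:i<\theta\rangle$ cofinal in $\delta$ with $\theta\le\lambda$, build representations $\langle N_{i,\epsilon}:\epsilon<\lambda^+\rangle$ of $M_{\alpha_i}$ increasing and continuous also in $i$, and set $N_{\delta,\epsilon}:=\bigcup_{i<\theta}N_{i,\epsilon}$ — of size $\lambda$, and a representation of $\bigcup_{\alpha<\delta}M_\alpha$. On the intersection of the $\le\lambda$-many clubs supplied by (a) for $M_{\alpha_i}\preceq^{NF}_{\lambda^+}M_{\alpha_{i+1}}$, symmetry and long transitivity along $\langle N_{i,\epsilon}:i\le\theta\rangle$ give $NF(N_{0,\epsilon},N_{0,\zeta},N_{\delta,\epsilon},N_{\delta,\zeta})$, hence (by (a)) $M_{\alpha_0}\preceq^{NF}_{\lambda^+}\bigcup_\alpha M_\alpha$, and (b) finishes. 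If $\cf(\delta)=\lambda^+$, re-index so $\delta=\lambda^+$ and the chain is continuous, and build a matrix $\langle N_{\alpha,\epsilon}:\alpha<\lambda^+,\ \epsilon<\lambda^+\rangle$ of models in $K_\lambda$, monotone and continuous in each coordinate, with $\langle N_{\alpha,\epsilon}:\epsilon<\lambda^+\rangle$ a representation of $M_\alpha$; a standard closure argument then shows the diagonal $\langle N_{\epsilon,\epsilon}:\epsilon\in C\rangle$ is a representation of $M_{\lambda^+}$ for a club $C$. Applying (a) to $M_0\preceq^{NF}_{\lambda^+}M_\epsilon$ for every $\epsilon$ and taking the diagonal intersection of the resulting clubs gives $NF(N_{0,\epsilon},N_{0,\zeta},N_{\epsilon,\epsilon},N_{\epsilon,\zeta})$ for $\epsilon<\zeta$ in a club; pushing the fourth coordinate up from $N_{\epsilon,\zeta}$ to $N_{\zeta,\zeta}$ by monotonicity of $NF$ (axiom (b)) yields $NF(N_{0,\epsilon},N_{0,\zeta},N_{\epsilon,\epsilon},N_{\zeta,\zeta})$, which, after symmetry and along consecutive points of the club, is precisely a $\widehat{NF}$-witness for $M_0\preceq^{NF}_{\lambda^+}M_{\lambda^+}$. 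The genuinely technical part throughout is the coherent construction of these representations and matrices of $K_\lambda$-models and the associated club bookkeeping (intersecting $\le\lambda$-many clubs in $\lambda^+$, diagonal intersections, closure points of the diagonal); the $NF$-algebra itself is routine once (a) is secured, and I expect the $\cf(\delta)=\lambda^+$ diagonal case to be where most care is needed.
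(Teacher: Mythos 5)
The paper offers no proof of this statement: it is imported as a Fact from \cite{jrsh875}, so there is nothing internal to compare against and I am judging your argument on its own terms. Your architecture --- establish the club criterion (a) first, then do all the $NF$-algebra (symmetry to interchange the second and third coordinates, long transitivity along subchains, monotonicity to shrink or compatibly enlarge coordinates) on clubs, and handle the union axiom (d) by a cofinality split using a matrix of $K_\lambda$-models and its diagonal --- is the right one, and it is essentially the mechanism the paper itself gestures at in Proposition~\ref{a rectangle of models with NF}. Parts (a), (b), (c), (e) and the $\cf(\delta)\le\lambda$ case of (d) are correct as sketched (for antisymmetry in (b), note you are implicitly using that Definition~\ref{6.1} builds $M_0\preceq M_1$ into $M_0\preceq^{NF}_{\lambda^+}M_1$).

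There is, however, a genuine gap in the $\cf(\delta)=\lambda^+$ case of (d). You apply (a) to $M_0\preceq^{NF}_{\lambda^+}M_\epsilon$ to get clubs $E_\epsilon$ and claim that for $\epsilon<\zeta$ in the diagonal intersection $\triangle_{\gamma}E_\gamma$ one has $NF(N_{0,\epsilon},N_{0,\zeta},N_{\epsilon,\epsilon},N_{\epsilon,\zeta})$. But $\epsilon\in\triangle_{\gamma}E_\gamma$ only guarantees $\epsilon\in E_\gamma$ for $\gamma<\epsilon$; it does not give $\epsilon\in E_\epsilon$, and nothing forces it (each $E_\gamma$ could, for instance, be contained in $(\gamma,\lambda^+)$, making $\{\epsilon:\epsilon\in E_\epsilon\}$ empty). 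What you actually obtain is $NF(N_{0,\epsilon},N_{0,\zeta},N_{\gamma,\epsilon},N_{\gamma,\zeta})$ for all $\gamma<\epsilon$, and no axiom of $\bigotimes_{NF}$ lets you pass from these to the case $\gamma=\epsilon$: long transitivity needs $NF$ between consecutive rows of the matrix, not from the bottom row to each row. The repair stays inside your framework: apply (a) instead to the consecutive pairs $M_\gamma\preceq^{NF}_{\lambda^+}M_{\gamma+1}$, with the matrix representations chosen coherently, obtaining clubs $E_\gamma$ on which $NF(N_{\gamma,\epsilon},N_{\gamma,\zeta},N_{\gamma+1,\epsilon},N_{\gamma+1,\zeta})$ holds; for $\epsilon<\zeta$ in $\triangle_{\gamma}E_\gamma$ this holds for every $\gamma<\epsilon$, and a single application of symmetry plus long transitivity along the first coordinate up to $\epsilon$ (using continuity of the matrix in $\alpha$, so that $N_{\epsilon,\eta}=\bigcup_{\gamma<\epsilon}N_{\gamma,\eta}$) yields exactly $NF(N_{0,\epsilon},N_{0,\zeta},N_{\epsilon,\epsilon},N_{\epsilon,\zeta})$; from there your compatible enlargement of the fourth coordinate to $N_{\zeta,\zeta}$ and the re-enumeration along consecutive club points go through as written.
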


\begin{remark}\label{remark preceq^{NF}}
Let $M_1,M_2$ be models of cardinality $\lambda^+$ with $M_1 \preceq M_2$. Then $M_1 \preceq^{NF}_{\lambda^+}M_2$ if and only if for every two filtrations $\langle M_{1,\alpha}:\alpha<\lambda^+ \rangle$ and $\langle M_{2,\alpha}:\alpha<\lambda^+ \rangle$ of $M_1$ and $M_2$ respectively, for some club $E$ of $\lambda^+$ for every $\alpha \in E$ we have $\widehat{NF}(M_{1,\alpha},M_{2,\alpha},M_1,M_2)$. 
\end{remark}

\begin{proof}
One direction holds by definition. We prove the hard direction. Suppose $M_1 \preceq^{NF}_{\lambda^+}M_2$. Let $\langle M_{1,\alpha}:\alpha<\lambda^+ \rangle$ and $\langle M_{2,\alpha}:\alpha<\lambda^+ \rangle$ be two filtrations of $M_1$ and $M_2$ respectively. By Fact \ref{preceq^{NF}-properties}(a), for some club $E$ of $\lambda^+$, for every $\epsilon,\zeta \in E$ if $\epsilon<\zeta$ then $NF(M_{1,\epsilon},M_{1,\zeta},M_{2,\epsilon},M_{2,\zeta})$. Let $\alpha \in E$. Then the filtrations $\langle M_{1,\epsilon}:\epsilon \in E-\alpha \rangle$ and $\langle M_{2,\epsilon}:\epsilon \in E-\alpha \rangle$ wittness that $\widehat{NF}(M_{1,\alpha},M_{2,\alpha},M_1,M_2)$. 
\end{proof}

\begin{lemma}\label{we can use NF}
 
For every two models $M,M^+$ of cardinality $\lambda^+$: $$M \preceq M^+ \Leftrightarrow M \preceq^{NF}_{\lambda^+}M^+.$$

\end{lemma}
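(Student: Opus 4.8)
The statement to prove is Lemma~\ref{we can use NF}: for $M, M^+ \in K_{\lambda^+}$ with $M \preceq M^+$, we have $M \preceq^{NF}_{\lambda^+} M^+$. (The reverse implication $M \preceq^{NF}_{\lambda^+} M^+ \Rightarrow M \preceq M^+$ is immediate from Definition~\ref{6.4 in april}, so the content is the forward direction.) The idea is to build, by simultaneous induction on $\alpha < \lambda^+$, two filtrations $\langle N_{0,\alpha} : \alpha < \lambda^+ \rangle$ of $M$ and $\langle N_{1,\alpha} : \alpha < \lambda^+ \rangle$ of $M^+$ so that $NF(N_{0,\alpha}, N_{1,\alpha}, N_{0,\alpha+1}, N_{1,\alpha+1})$ holds at every successor step; by Definition~\ref{definition of widehat{NF}} this yields $\widehat{NF}(N_{0,0}, N_{1,0}, M, M^+)$ and hence $M \preceq^{NF}_{\lambda^+} M^+$.

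\textbf{Carrying out the construction.} First I would fix arbitrary filtrations $\langle P_\alpha : \alpha < \lambda^+\rangle$ of $M$ and $\langle Q_\alpha : \alpha < \lambda^+\rangle$ of $M^+$ (with $P_\alpha \preceq Q_\alpha$, arranged by interleaving), to be used as ``targets'' guaranteeing that the $N_{i,\alpha}$ we build exhaust $M$ and $M^+$. Then, by induction on $\alpha$: at stage $0$ pick any $N_{0,0} \in K_\lambda$ with $N_{0,0} \preceq M$, and use the extension axiom for $NF$ (Definition~\ref{definition of non-forking relation on models}(c), after possibly adjusting so that $N_{0,0} \cap N_{1,0} = N_{0,0}$, i.e.\ $N_{0,0} \preceq N_{1,0}$) to get $N_{1,0} \in K_\lambda$ with $N_{0,0} \preceq N_{1,0} \preceq M^+$ and $NF$ witnessing the start — more carefully, I want $N_{0,0} \preceq N_{1,0}$ and at successor stages I apply the extension property of $NF$ to the configuration $N_{0,\alpha} \preceq N_{0,\alpha+1}$, $N_{0,\alpha} \preceq N_{1,\alpha}$ to produce a model containing a copy of $N_{1,\alpha}$; since we need $N_{1,\alpha} \preceq N_{1,\alpha+1}$ as actual submodels of $M^+$, I use the weak uniqueness of $NF$ (Definition~\ref{definition of non-forking relation on models}(d)) together with amalgamation/$\LS$ inside $M^+$ to realize the $NF$-amalgam as a $\preceq$-submodel of $M^+$ of size $\lambda$ that contains both $N_{1,\alpha}$ and (a suitable piece ensuring cofinality in $M^+$) an element of $Q_\alpha$. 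At limit stages take unions; here Long transitivity of $NF$ (Definition~\ref{definition of non-forking relation on models}(f)) is what propagates the $NF$-relations through the limit so the inductive invariant $NF(N_{0,0}, N_{0,\alpha}, N_{1,0}, N_{1,\alpha})$ is maintained — though for the conclusion we actually only need the successor-step relations to invoke Definition~\ref{definition of widehat{NF}}(3).

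\textbf{The main obstacle.} The delicate point is \emph{keeping the $N_{1,\alpha}$ honest submodels of the fixed $M^+$ while simultaneously forcing them to be cofinal} (so that $\bigcup_\alpha N_{1,\alpha} = M^+$, not merely some $\preceq$-extension of $M$). The extension axiom of $NF$ only produces \emph{some} amalgam $N_3$; I must transport it back inside $M^+$ using weak uniqueness of $NF$ plus the amalgamation property in $\lambda^+$ (Hypothesis~\ref{hypothesis 2}(1)) and the downward Löwenheim--Skolem property to pick a size-$\lambda$ submodel of $M^+$ that both contains the transported copy and catches up with the target filtration $\langle Q_\alpha\rangle$. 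Interleaving the two requirements (the $NF$-amalgam condition and the cofinality/catch-up condition) at each successor stage, and checking that continuity of the filtrations together with Long transitivity survives limits, is the real work; everything else is bookkeeping. Once the two filtrations are built with $NF(N_{0,\alpha}, N_{1,\alpha}, N_{0,\alpha+1}, N_{1,\alpha+1})$ at all successors, Definition~\ref{definition of widehat{NF}} gives $\widehat{NF}(N_{0,0}, N_{1,0}, M, M^+)$ and Definition~\ref{6.4 in april} gives $M \preceq^{NF}_{\lambda^+} M^+$, completing the proof.
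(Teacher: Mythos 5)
There is a genuine gap at the heart of your construction, namely in the successor step. Given $N_{0,\alpha}\preceq N_{1,\alpha}\preceq M^+$ and $N_{0,\alpha+1}\preceq M$, the extension axiom of $NF$ (Definition \ref{definition of non-forking relation on models}(c)) produces only an \emph{abstract} amalgam $N_3$ with $NF(N_{0,\alpha},N_{1,\alpha},N_{0,\alpha+1},N_3)$; there is no reason this amalgam embeds into $M^+$ over $N_{1,\alpha}\cup N_{0,\alpha+1}$. Weak uniqueness cannot repair this: it compares two quadruples that are \emph{both already known} to satisfy $NF$, so to invoke it you would first need to know that $M^+$ itself contains an $NF$-amalgam of $N_{1,\alpha}$ and $N_{0,\alpha+1}$ over $N_{0,\alpha}$ --- which is essentially the statement you are trying to prove. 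Amalgamation in $\lambda^+$ only lets you embed $N_3$ into a proper $\preceq$-extension of $M^+$, so the union of your $N_{1,\alpha}$'s would overshoot $M^+$. The fact that your outline never uses tameness, density of basic types, or uniqueness of the non-forking extension in $\lambda^+$ is a symptom: the lemma (Shelah's goodness$^+$ question) is not provable from the $NF$-axioms alone by bookkeeping.

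The paper's proof is structured precisely to get around this obstacle. First, by Fact \ref{preceq^{NF}-properties}(c) it suffices to find \emph{some} $N\in K_{\lambda^+}$ with $M\preceq^{NF}_{\lambda^+}N$ and $M^+\preceq N$ --- so one is allowed to leave $M^+$ and grow the ambient model, which is exactly what your construction cannot afford to do. Second, the absorption of $M^+$ into the $NF$-controlled part is done one basic-type realization at a time (Claim \ref{importance of maximal}): density of basic types gives $a\in N^+-N$ with $\operatorname{tp}(a,N,N^+)$ basic, the extension property of $\widehat{NF}$ and the fact that $\widehat{NF}$ respects $\frak{s}$ produce an element $f(b)$ realizing the same non-forking extension, and then \emph{uniqueness of non-forking in $\lambda^+$} (i.e.\ tameness, via Proposition \ref{uniqueness in s^+}) identifies the two types so that $a$ itself lands in an $\preceq^{NF}_{\lambda^+}$-extension after a further amalgamation. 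Third, since each such step enlarges the right-hand model, the iteration must run for $\lambda^{++}$ steps, and a club argument on two filtrations of the union (Claim \ref{<_A satisfies axiom c} plus the final contradiction) yields a $<_A$-maximal pair, which is then shown to have $N=N^+$. Your proposal is missing all three of these ingredients, and the step you flag as ``the real work'' is the one that cannot be completed as described.
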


\begin{proof}
By Fact \ref{preceq^{NF}-properties}(c), it is sufficient to find a model $N \in K_{\lambda^+}$ such that $
M \preceq ^{NF}_{\lambda^+}N$ and $M^+ \preceq N$.

Without loss of generality, $M \neq M^+$ (otherwise, $N=:M$ does).
Let $A$ be the class of pairs, $(M_1,M^+_1)$ of models of cardinality $\lambda^+$ with $M_1 \preceq M^+_1$. 
Define a strict partial order, $<_A$
on $A$, by: $(M_1,M^+_1) <_A (M_2,M^+_2)$ when the following hold:
\begin{enumerate}
\item $M_1 \preceq^{NF}_{\lambda^+} M_2$, 
\item $M^+_1 \preceq M^+_2$,
\item $M_2 \cap M^+_1 \neq M_1$.
\end{enumerate}

\begin{displaymath}
\xymatrix{M_2 \ar[r]^{id} & M^+_2 \\ 
M_1 \ar[r]^{id} \ar[u]^{\preceq^{NF}_{\lambda^+}} & M^+_1 \ar[u]^{id}  
}
\end{displaymath}

It is sufficient to find a pair $(N,N^+) \in A$ such that $(M,M^+)<_A(N,N^+)$ and $N=N^+$, becuase it yields $M^+ \preceq N$. So by the following claim, it is sufficient to find a pair $(N,N^+) \in A$ such that $(M,M^+)<_A(N,N^+)$ and $(N,N^+)$ is a  $<_A$-maximal pair in $A$. 
\begin{claim}\label{importance of maximal}
Let $(N,N^+) \in A$. If $(N,N^+)$ is $<_A$-maximal then $N=N^+$.
\end{claim}

\begin{proof}
Let $(N,N^+)$ be a pair in $A$ with $N \neq N^+$. We should prove that $(N,N^+)$ is not $<_A$-maximal. By density of basic types (in $\lambda^+$), for some $a \in N^+-N$ the type $tp(a,N,N^+)$ is basic. So there is $N^- \in K_\lambda$ such that $N^- \preceq N$ and $tp(a,N,N^+)$ does not fork over $N^-$. For some $N^-_1 \in K_\lambda$ and some $b \in N^-_1$ we have $tp(b,N^-,N^-_1)=tp(a,N^-,N^+)$.
\begin{displaymath}
\xymatrix{b \in N^-_1 \ar[rr]^{f} && N_1 \ar[r]^{g} & N_1^+ \\ 
N^- \ar[rr]^{id}  \ar[u]^{id} && N \ar[r]^{id} \ar[u]^{\preceq^{NF}_{\lambda^+}}   & N^+ \ni a \ar[u]^{id}
}
\end{displaymath}
By Fact \ref{the widehat{NF}-properties}(c), for some amalgamation $(id \restriction N,f,N_1)$ of $N^-_1$ and $N$ over $N^-$ we have $\widehat{NF}(N^-,N,f[N^-_1],N_1)$. Since the relation $\widehat{NF}$ respects $\frak{s}$, $tp(f(b),N,N_1)$ does not fork over $N^-$. By uniqueness of non-forking (in $\lambda^+$), $tp(a,N,N^+)=tp(f(b),N,N_1)$. Therefore there is an amalgamation $(id \restriction N^+,g,N_1^+)$ of $N^+$ and $N_1$ over $N$ with $g(f(b))=a$. Now we have $$(N,N^+)<_A(g[N_1],N_1^+)$$ (because $N^+ \preceq N_1^+$, $N \preceq^{NF}_{\lambda^+} g[N_1]$ and $a \in g[N_1] \cap N^+-N$). 
\end{proof}
 
\begin{claim}\label{<_A satisfies axiom c}
If $\langle (M_\alpha,M^+_\alpha):\alpha<\delta \rangle$ is a $<_A$-increasing continuous sequence of pairs in $A$ then $(M_\alpha,M^+_\alpha)<_A(\bigcup_{\alpha<\delta}M_\alpha,\bigcup_{\alpha<\delta}M^+_\alpha)$ for each $\alpha<\delta$.
\end{claim}

\begin{proof}
By smoothness, $\bigcup_{\alpha<\delta}M_\alpha \preceq \bigcup_{\alpha<\delta}N_\alpha$. By \cite[Theorem complete...]{jrsh875}, $M_{\lambda^+} \preceq^{NF}_{\lambda^+} \bigcup_{\alpha<\delta}M^\alpha$. By the definition of AEC, $M_{\lambda^{+}+1} \preceq \bigcup_{\alpha<\delta}N_\alpha$.
\end{proof}

Now we can complete the proof of the lemma, using Claim \ref{importance of maximal}. For the sake of a contradiction, assume that there is no $<_A$-maximal pair. We choose by induction on $\alpha<\lambda^{++}$ a pair $(M_\alpha,M^+_\alpha) \in A$ such that for every $\alpha<\lambda^{++}$, $(M_\alpha,M^+_\alpha) <_A (M_{\alpha+1},M^+_{\alpha+1})$ and for every limit $\alpha<\lambda^{++}$, $M_\alpha=\bigcup_{\beta<\alpha}M_\beta$ and $M^+_\alpha=\bigcup_{\beta<\alpha}M^+_\beta$ (so by Claim \ref{<_A satisfies axiom c}, $(M_\beta,M^+_\beta)<_A(M_\alpha,M^+_\alpha)$ for each $\beta<\alpha$). Define $M_{\lambda^{++}}=:\bigcup_{\alpha<\lambda^{++}}M_\alpha$. The sequences $\langle M_\alpha:\alpha<\lambda^{++} \rangle$ and $\langle M^+_\alpha \cap M_{\lambda^{++}}:\alpha<\lambda^{++} \rangle$ are filtrations of $M_{\lambda^{++}}$. So for some $\alpha<\lambda^{++}$ (actually, for a club of $\alpha$'s) we have $M_\alpha=M^+_\alpha \cap M_{\lambda^{++}}$. So $$ M_\alpha \subseteq M^+_\alpha \cap M_{\alpha+1} \subseteq M^+_\alpha \cap M_{\lambda^{++}}=M_\alpha.$$
Therefore $M^+_\alpha \cap M_{\alpha+1}=M_\alpha$, which is impossible, because $(M_\alpha,M^+_\alpha) <_A (M_{\alpha+1},M^+_{\alpha+1})$. This contradiction shows that there is a $<_A$-extension $(N,N^+)$ of $(M,M^+)$, such that $(N,N^+)$ is $<_A$-maximal. So by Claim \ref{importance of maximal}, $N=N^+$. So $M \preceq^{NF}_{\lambda^+}$ and $M^+ \preceq N^+=N$. Lemma \ref{we can use NF} is proved.
\end{proof}

\begin{proposition} \label{a rectangle of models with NF}
Let $\alpha^*,\beta^*$ be two ordinals $\leq \lambda^+$ and let $\langle M_{a,\alpha}:\alpha<\alpha^* \rangle$ and $\langle M_{b,\alpha}:\alpha<\beta^* \rangle$ be two increasing continuous sequence of models of cardinality $\lambda$ such that $M_{a,0}=M_{b,0}$. Then there is a `rectangle of models' $\{M_{\alpha,\beta}:\alpha<\alpha^*,\beta<\beta^* \}$ and a sequence $\{f_\beta:\beta<\beta^* \}$ such that for every $\alpha<\alpha^*$ and $\beta<\beta^*$ the following hold:
\begin{enumerate}
\item $M_{\alpha,\beta} \in K_\lambda$, \item if $\alpha+1<\alpha^*$ then $M_{\alpha,\beta} \preceq M_{\alpha+1,\beta}$, \item if $\beta+1<\beta^*$ then $M_{\alpha,\beta} \preceq M_{\alpha,\beta+1}$, \item if $\alpha$ is a limit ordinal then $M_{\alpha,\beta}=\bigcup_{\alpha'<\alpha}M_{\alpha',\beta}$, \item if $\beta$ is a limit ordinal then $M_{\alpha,\beta}=\bigcup_{\beta'<\beta}M_{\alpha,\beta'}$,  
\item $f_\beta$ is an isomorphism of $M_{b,\beta}$ onto $M_{0,\beta}$, \item if $\beta=0$ then $M_{\alpha,\beta}=M_{a,\alpha}$ and $f_\beta$ is the identity on $M_{a,0}=M_{b,0}$, \item $f_{\beta'} \subseteq f_\beta$ for every $\beta'<\beta$, \item if $\beta$ is a limit ordinal then $f_\beta=\bigcup_{\beta'<\beta}f_{\beta'}$, \item $NF(M_{\alpha,\beta},M_{\alpha,\beta+1},M_{\alpha+1,\beta},M_{\alpha+1,\beta+1})$, unless $\alpha+1=\alpha^*$ or $\beta+1=\beta^*$.
\end{enumerate}  
\end{proposition}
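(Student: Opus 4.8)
The plan is to construct the rectangle $\{M_{\alpha,\beta}\}$ by a double induction, the outer one on $\beta<\beta^*$, producing at each stage the whole column $\langle M_{\alpha,\beta}:\alpha<\alpha^* \rangle$ together with the isomorphism $f_\beta:M_{b,\beta}\to M_{0,\beta}$. The base case $\beta=0$ is forced by clause (7): we set $M_{\alpha,0}=M_{a,\alpha}$ and $f_0=\id_{M_{a,0}}$. For the limit stages of the outer induction, clauses (5), (9) simply dictate taking unions: $M_{\alpha,\beta}=\bigcup_{\beta'<\beta}M_{\alpha,\beta'}$ and $f_\beta=\bigcup_{\beta'<\beta}f_{\beta'}$; one checks that $f_\beta$ is still an isomorphism onto $M_{0,\beta}$ by continuity of $\langle M_{b,\beta'}\rangle$ and $\langle M_{0,\beta'}\rangle$, and that clause (10) at the new limit level holds vacuously (since there $\beta=\beta^*$ can be excluded, or when $\beta<\beta^*$ is limit, clause (10) only concerns successor $\beta$).

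The heart of the matter is the successor step $\beta\to\beta+1$ (assuming $\beta+1<\beta^*$, else nothing is required). Here I have the column $\langle M_{\alpha,\beta}:\alpha<\alpha^* \rangle$ and $f_\beta:M_{b,\beta}\to M_{0,\beta}$, and I know $M_{b,\beta}\preceq M_{b,\beta+1}$. First transport: let $N=f_\beta[M_{b,\beta+1}]$ via an isomorphism extending $f_\beta$, so $M_{0,\beta}\preceq N$; rename so that $N$ plays the role of the intended $M_{0,\beta+1}$ and call the extended map $f_{\beta+1}$. Then I must build $\langle M_{\alpha,\beta+1}:\alpha<\alpha^* \rangle$, an increasing continuous chain starting with $M_{0,\beta+1}=N$, with $M_{\alpha,\beta}\preceq M_{\alpha,\beta+1}$, and with $NF(M_{\alpha,\beta},M_{\alpha,\beta+1},M_{\alpha+1,\beta},M_{\alpha+1,\beta+1})$ for all $\alpha$ with $\alpha+1<\alpha^*$. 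This is exactly a ``long transitivity'' amalgamation of the chain $\langle M_{\alpha,\beta}\rangle$ with $N$ over $M_{0,\beta}$: I build $M_{\alpha,\beta+1}$ by induction on $\alpha$, using the Extension axiom of $\bigotimes_{NF}$ (Definition \ref{definition of non-forking relation on models}(c)) to get $NF(M_{\alpha,\beta},M_{\alpha,\beta+1},M_{\alpha+1,\beta},M_{\alpha+1,\beta+1})$ at each successor $\alpha$, taking unions at limit $\alpha$, and invoking Long transitivity (Definition \ref{definition of non-forking relation on models}(f)) to keep the ``global'' $NF$ relation $NF(M_{0,\beta},M_{0,\beta+1},M_{\alpha,\beta},M_{\alpha,\beta+1})$ coherent along the way (this is what makes the inductive hypothesis at limit $\alpha$ go through, since the disjointness/monotonicity conditions needed to apply Extension at stage $\alpha+1$ follow from the global relation). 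The Monotonicity axiom (b) is used to pass between the ``global'' $NF$ and the ``local'' square $NF$.

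The main obstacle I expect is bookkeeping the compatibility conditions at limit ordinals $\alpha$ in the inner induction: to apply the Extension axiom at stage $\alpha+1$ one needs $M_{\alpha+1,\beta}\cap M_{\alpha,\beta+1}=M_{\alpha,\beta}$, and this disjointness is not automatic from the local squares alone — it has to be carried as part of the inductive hypothesis in the strengthened form $NF(M_{0,\beta},M_{0,\beta+1},M_{\alpha,\beta},M_{\alpha,\beta+1})$ (whose disjointness clause (a) gives $M_{\alpha,\beta+1}\cap M_{0,\beta}=M_{0,\beta}$... rather $M_{0,\beta+1}\cap M_{\alpha,\beta}=M_{0,\beta}$), and then the needed disjointness for the next Extension step is extracted via Monotonicity. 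Properly, one should first replace the given sequences by isomorphic copies with the disjointness built in, or argue as in the standard construction of $NF$-rectangles in \cite{jrsh875}; since $\bigotimes_{NF}$ is closed under isomorphism (axiom (g)), renaming freely is harmless. Everything else — continuity at limits, the role of $f_\beta$, clauses (1)–(9) — is routine once the inner induction is set up correctly.
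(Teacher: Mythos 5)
Your proposal follows essentially the same route as the paper: induction on $\beta$, with the base and limit cases forced by clauses (7), (5) and (9), and the successor case built column-by-column along $\alpha$ using the Extension and Long transitivity axioms of $\bigotimes_{NF}$ (plus closure under isomorphisms for renaming). In fact the paper's own proof is only a skeleton --- the successor case is literally left as ``complete...'' --- and your outline is the intended completion; the one imprecision is that the disjointness $M_{\alpha,\beta+1}\cap M_{\alpha+1,\beta}=M_{\alpha,\beta}$ needed to invoke Extension at the next stage is not extracted from the global $NF$ relation via Monotonicity (that only controls intersections with models \emph{inside} the quadruple), but is secured exactly as you say in your fallback, by renaming the new elements of each $M_{\alpha,\beta+1}$ away from $\bigcup_{\alpha'}M_{\alpha',\beta}$ and carrying this as an explicit inductive hypothesis.
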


\begin{displaymath}
\xymatrix{M^a_{\alpha+1} \ar[r]^{=} & M_{\alpha+1,0} 
\ar[rrr]^{id} &&& M_{\alpha+1,\beta} \ar[r]^{id} & M_{\alpha+1,\beta+1} \\
M^a_{\alpha} \ar[r]^{=} & M_{\alpha,0} \ar[u]^{id}
\ar[rrr]^{id} &&& M_{\alpha,\beta} \ar[r]^{id} \ar[u]^{id} & M_{\alpha,\beta+1} \ar[u]^{id} \\ \\
M^a_{0} \ar[r]^{=} &  M_{0,0} \ar[uu]^{id}
\ar[rrr]^{id} &&& M_{0,\beta} \ar[r]^{id} \ar[uu]^{id} & M_{0,\beta+1} \ar[uu]^{id} \\
& M^b_0 \ar[u]^{=} &&& M^b_\beta \ar[u]^{f_\beta} & M^b_{\beta+1} \ar[u]^{f_{\beta+1}}
}
\end{displaymath}

\begin{proof}
We $\{M_{\alpha,\beta}:\alpha<\alpha^*\}$ and $f_\beta$ by induction on $\beta$. 

\emph{Case a:} $\beta=0$. In this case, see Clause (7).

\emph{Case b:} $\beta$ is a limit ordinal. In this case, by Clause (5), we must choose $M_{\alpha,\beta}=\bigcup_{\beta'<\beta}M_{\alpha,\beta'}$ and by Clause (9), we must choose $f_\beta=\bigcup_{\beta'<\beta}f_{\beta'}$. Fix $\alpha<\alpha^*$. By Clauses (3) and (5) of the induction hypothesis, the sequences $\langle M_{\alpha,\beta'}:\beta'<\beta \rangle$ and $\langle M_{\alpha+1,\beta'}:\beta'<\beta \rangle$ are increasing and continuous. So by smoothness $M_{\alpha,\beta} \preceq M_{\alpha+1,\beta}$. Similarly, $f_\beta[M^b_\beta] \preceq M_{0,\beta}$. It remains to show that if $\alpha$ is limit then $M_{\alpha,\beta}=\bigcup_{\alpha'<\alpha}M_{\alpha',\beta}$. On one hand, if $x \in M_{\alpha',\beta}$ for some $\alpha'<\alpha$ then by the induction hypothesis, $x \in M_{\alpha',\beta'}$ for some $\beta'<\beta$. But $M_{\alpha',\beta'} \subseteq M_{\alpha,\beta'} \subseteq M_{\alpha,\beta}$. On the other hand, if $x \in M_{\alpha,\beta}$ then $x \in M_{\alpha,\beta'}$ for some $\beta'<\beta$. Therefore $x \in M_{\alpha',\beta'}$ for some $\alpha'<\alpha$. But $M_{\alpha',\beta'} \subseteq M_{\alpha',\beta}$ compelte...

\emph{Case c:} $\beta=\gamma+1$. complete...
\end{proof}

\begin{proposition} \label{widehat{NF} with independence}
If the sequence $\langle N_\alpha,a_\alpha:\alpha<\alpha^* \rangle ^\frown \langle N_{\alpha^*} \rangle$ is independent over $N_0$ and $N_0 \preceq N_0^* \in K_{\lambda^+}$ then for some $N_1^* \in K_{\lambda^+}$ and some embedding $f:N_0^* \to N_1^*$, the sequence $\langle a_\alpha:\alpha<\alpha^* \rangle$ is independent in $(f[N_0^*],N_1^*)$ and  $\widehat{NF}(N_0,N_{\alpha^*},f[N_0^*],N_1^*)$. 
\end{proposition}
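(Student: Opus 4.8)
The statement says: if $\langle N_\alpha, a_\alpha : \alpha < \alpha^* \rangle ^\frown \langle N_{\alpha^*}\rangle$ is independent over $N_0$ (all models of size $\lambda$), and $N_0 \preceq N_0^* \in K_{\lambda^+}$, then we can find $N_1^* \in K_{\lambda^+}$ and an embedding $f \colon N_0^* \to N_1^*$ so that $\langle a_\alpha : \alpha < \alpha^* \rangle$ is independent in $(f[N_0^*], N_1^*)$ and $\widehat{NF}(N_0, N_{\alpha^*}, f[N_0^*], N_1^*)$. The natural approach is to ``base-change'' the given independent sequence from $N_0$ up to $N_0^*$ using $\widehat{NF}$: build $N_1^*$ as the top of an $\widehat{NF}$-amalgam of $N_{\alpha^*}$ and $N_0^*$ over $N_0$, carrying along the intermediate models $N_\alpha$ and the elements $a_\alpha$.

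\emph{Step 1 (extend via $\widehat{NF}$).} Apply Fact~\ref{the widehat{NF}-properties}(c) (Extension for $\widehat{NF}$) to $N_0 \preceq N_{\alpha^*}$ (size $\lambda$) and $N_0 \preceq N_0^*$ (size $\lambda^+$): after renaming we may assume $N_0^* \cap N_{\alpha^*} = N_0$, and obtain $N_1^* \in K_{\lambda^+}$ with $\widehat{NF}(N_0, N_{\alpha^*}, N_0^*, N_1^*)$; then $f$ is essentially the inclusion (compose with the renaming embedding if disjointness had to be arranged). This already delivers the $\widehat{NF}$-clause of the conclusion, so the real content is the independence clause.

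\emph{Step 2 (transport the witnessing sequence).} Recall that independence of $\langle a_\alpha\rangle$ in $(f[N_0^*], N_1^*)$ (Definition~\ref{definition of independence}(b), or \ref{independence between elements} when $\alpha^*=2$) requires an increasing continuous witnessing chain $\langle P_\alpha : 0 < \alpha \le \alpha^*\rangle$ with $f[N_0^*] \preceq P_0$, with $a_\alpha \in P_{\alpha+1} - P_\alpha$, with each $\ftp(a_\alpha, P_\alpha, P_{\alpha+1})$ not forking over $f[N_0^*]$, and with $N_1^* \preceq$ some $P^+ \succeq P_{\alpha^*}$ (or, arranging things inside $N_1^*$, simply $P_{\alpha^*} \preceq N_1^*$). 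To produce it, I would amalgamate the given witnessing chain over $N_0$ — namely the $N_\alpha$'s — with $N_0^*$ over $N_0$, respecting $\widehat{NF}$ at each stage, using Proposition~\ref{a rectangle of models with NF} (or an iterated application of $\widehat{NF}$-extension and the ``respecting the frame'' clause). Concretely, build models $P_\alpha$ of size $\lambda^+$ with $\widehat{NF}(N_\alpha, N_{\alpha^*}, P_\alpha, N_1^*)$-type configurations and $\widehat{NF}(N_0, N_\alpha, N_0^*, P_\alpha)$; then Fact~\ref{the widehat{NF}-properties}(e) (respecting the frame) converts ``$\ftp(a_\alpha, N_\alpha, N_{\alpha+1})$ does not fork over $N_0$'' into ``$\ftp(a_\alpha, P_\alpha, P_{\alpha+1})$ does not fork over $N_0$'', hence (by monotonicity of non-forking, since $N_0 \preceq f[N_0^*] \preceq P_\alpha$) does not fork over $f[N_0^*]$. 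Continuity of $\widehat{NF}$ / of the frame at limit $\alpha$ handles limit stages, and $\bigcup P_\alpha$ can be taken inside $N_1^*$ (or we enlarge $N_1^*$ once at the end, which is harmless since the statement only asks for \emph{some} $N_1^*$).

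\emph{Main obstacle.} The delicate point is the limit stages of Step 2: one must keep the chain $\langle P_\alpha\rangle$ increasing \emph{continuous} while simultaneously maintaining the $\widehat{NF}$-relations and the non-forking of each $\ftp(a_\alpha,P_\alpha,P_{\alpha+1})$ over $f[N_0^*]$, and one must ensure that $P_{\alpha^*}$ can be taken of size exactly $\lambda^+$ and sitting inside (an extension of) $N_1^*$ coherently with the $\widehat{NF}$ already fixed in Step~1. This is exactly the kind of bookkeeping that Proposition~\ref{a rectangle of models with NF} is designed to handle — so I expect the proof to reduce, after Step~1, to invoking that proposition with $\langle M_{b,\alpha}\rangle = \langle N_\alpha\rangle$, reading off the rectangle's rows as a filtration of the $P_\alpha$'s, applying Fact~\ref{the widehat{NF}-properties}(e) column-wise, and then passing to unions to get both required conclusions.
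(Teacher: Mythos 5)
Your plan coincides with the paper's intended argument: the paper's own proof of this proposition is left as a fragment in the source (it breaks off at ``complete...''), but it opens with exactly the doubly-indexed array $N_{\alpha,\beta}$, $\alpha\le\alpha^*$, $\beta<\lambda^+$, of $\lambda$-sized models with $NF$ on adjacent cells --- i.e.\ the rectangle of Proposition~\ref{a rectangle of models with NF} with one side of length $\lambda^+$ --- and taking unions along the $\beta$-direction yields $f$, the witnessing chain $\langle P_\alpha\rangle$, and $N_1^*$ simultaneously, the $\widehat{NF}$ clause coming from long transitivity and symmetry of $NF$ and the independence clause from $NF$ respecting $\frak{s}$ together with transitivity and monotonicity of non-forking. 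So the reduction in your final paragraph is the right one, and your Step~1 is harmless but redundant: since the statement only asks for \emph{some} $N_1^*$, you may simply define it as the union of the top row rather than fixing it in advance and then reconciling via weak uniqueness. The one step you should not leave as a citation is the transfer of ``$\ftp(a_\alpha,N_\alpha,N_{\alpha+1})$ does not fork over $N_0$'' to ``$\ftp(a_\alpha,P_\alpha,P_{\alpha+1})$ does not fork over $f[N_0^*]$'': Fact~\ref{the widehat{NF}-properties}(e) is stated for an element of the $\lambda^+$-sized model $M_0$ with the conclusion computed over the $\lambda$-sized $N_1$, whereas you need the dual configuration (element in the small model, type over the big one). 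This does hold, but it must be extracted cell-by-cell from the rectangle using the symmetry clause of $\bigotimes_{NF}$, the $\lambda$-level ``respects the frame'' property, transitivity, and Definition~\ref{forking for big models}, rather than quoted directly.
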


\begin{proof}
For every $\beta<\lambda^+$ and every $\alpha \leq \alpha^*$, we choose $N_{\alpha,\beta} \in K_\lambda$ and an embedding $f_{\alpha,\beta}:N_{\alpha,\beta} \to N_{\alpha,\beta+1}$ such that the following hold:
\begin{enumerate}
\item if $\alpha_1<\alpha_2 \leq \alpha^*$ and $\beta_1 \leq \beta_2$ then complete... 
\end{enumerate}

\end{proof}

\begin{proposition}\label{widehat{NF} implies independence}\label{widehat{NF} respects independence}
Let $\alpha^*<\lambda^+$. If $\widehat{NF}(M_0^-,M_1^-,M_0,M_1)$ and $\langle a_\alpha:\alpha<\alpha^* \rangle$ is independent in $(M_0^-,M_1^-)$ then it is independent in $(M_0,M_1)$.
\end{proposition}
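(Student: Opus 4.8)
The statement to prove is Proposition \ref{widehat{NF} respects independence}: if $\widehat{NF}(M_0^-,M_1^-,M_0,M_1)$ and $\langle a_\alpha:\alpha<\alpha^* \rangle$ (with $\alpha^*<\lambda^+$) is independent in $(M_0^-,M_1^-)$, then it is independent in $(M_0,M_1)$. Unwinding Definition \ref{independence between elements} (and its $\alpha^*$-ary analogue in Definition \ref{definition of independence}(b)), the hypothesis gives us a witnessing increasing continuous sequence $\langle N_\beta : \beta \le \alpha^* \rangle$ of models in $K_\lambda$ with $M_0^- \preceq N_0$, a model $N^+ \succeq M_1^-$ with $N_{\alpha^*} \preceq N^+$, and $\langle N_\beta,a_\beta : \beta<\alpha^* \rangle{}^\frown\langle N_{\alpha^*}\rangle$ independent over $M_0^-$; i.e.\ each $\ftp(a_\beta,N_\beta,N_{\beta+1})$ does not fork over $M_0^-$. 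The goal is to produce the corresponding witnesses with $M_0$ in place of $M_0^-$ and $M_1$ in place of $M_1^-$: an increasing continuous $\langle N'_\beta:\beta\le\alpha^*\rangle$ in $K_\lambda$ — wait, rather, since $M_0, M_1$ have size $\lambda^+$, we need the ambient models to have size $\lambda^+$ — so actually we want to exhibit, via Definition \ref{independence between elements}, models $M_0 \preceq P_1 \preceq P_2$ with $M_1 \preceq P_2$, the $a_\alpha$ arranged appropriately, and nonforking over $M_0$. I would first reduce to the $\alpha^*=1$ (single element) case or keep it general and use the serial structure directly.

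\textbf{Key steps.} First I would apply Proposition \ref{widehat{NF} with independence}: since $\langle N_\beta,a_\beta:\beta<\alpha^*\rangle{}^\frown\langle N_{\alpha^*}\rangle$ is independent over $M_0^-$ and $M_0^- \preceq M_0 \in K_{\lambda^+}$, we get $N_1^* \in K_{\lambda^+}$ and an embedding $f:M_0 \to N_1^*$ such that $\langle a_\alpha:\alpha<\alpha^*\rangle$ is independent in $(f[M_0],N_1^*)$ and $\widehat{NF}(N_0, N_{\alpha^*}, f[M_0], N_1^*)$. (Here I am using that $N_0 = N_0^-$-side base of the $\widehat{NF}$ relation matches the base of the independence witness; monotonicity of $\widehat{NF}$, Fact \ref{the widehat{NF}-properties}(b), lets me align the small models.) Next, comparing the two $\widehat{NF}$ facts — the hypothesis $\widehat{NF}(M_0^-,M_1^-,M_0,M_1)$ and the derived $\widehat{NF}(N_0,N_{\alpha^*},f[M_0],N_1^*)$ — I would invoke Weak Uniqueness of $\widehat{NF}$ (Fact \ref{the widehat{NF}-properties}(d)) together with the amalgamation property in $\lambda^+$ (Hypothesis \ref{hypothesis 2}) to amalgamate $M_1$ and $N_1^*$ over the common part, obtaining a single model $P \in K_{\lambda^+}$ into which both embed compatibly over $M_0$. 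Then $\langle a_\alpha:\alpha<\alpha^*\rangle$ is independent in $(M_0,P)$ via the transported witness, and since $M_1 \preceq P$, Definition \ref{independence between elements} gives independence in $(M_0,M_1)$ after intersecting back down — monotonicity of independence (which follows from monotonicity of $\dnf$) lets us restrict the top model from $P$ to $M_1$. Finally I would check the base-point bookkeeping: that the nonforking of each $\ftp(a_\beta, \cdot, \cdot)$ over $M_0^-$ upgrades to nonforking over $M_0$, using Fact \ref{the widehat{NF}-properties}(e) (respecting the frame) applied to $\widehat{NF}(M_0^-, M_1^-, M_0, M_1)$, which says precisely that basic types over $M_0^-$ realized with nonforking push forward to nonforking over... hmm, actually (e) goes the direction $N_0$-to-$N_1$, so I would instead use that $M_0^- \preceq M_0$ and transitivity/monotonicity of $\dnf$ in the extended frame (Fact \ref{lifting}) to see that nonforking over $M_0^-$ plus $M_0^- \preceq M_0$ combined with the $\widehat{NF}$-configuration yields nonforking over $M_0$.

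\textbf{Main obstacle.} The delicate point will be the amalgamation/alignment step: Proposition \ref{widehat{NF} with independence} produces $f[M_0]$, a \emph{copy} of $M_0$, not $M_0$ itself, and the two $\widehat{NF}$-quadruples live over different (isomorphic) small bases, so I must carefully use invariance of $\widehat{NF}$ under isomorphism together with Weak Uniqueness to glue everything into one ambient $\lambda^+$-model without disturbing the independence witness; keeping track of which maps are the identity and which are merely embeddings is where errors creep in. A secondary subtlety is that $\alpha^*$ can be any ordinal $<\lambda^+$, so the "rectangle of models" of Proposition \ref{a rectangle of models with NF} may be implicitly needed to organize the $\alpha^*$-many $a_\alpha$'s simultaneously against the $\lambda^+$-long filtration of $M_0$; I expect the cleanest route is to first handle $\alpha^* = 2$ (or even $1$) cleanly and then remark that the serial/long-transitivity structure of $\widehat{NF}$ (built on Long transitivity, Definition \ref{definition of non-forking relation on models}(f)) makes the general case formally identical.
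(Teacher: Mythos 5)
Your proposal follows essentially the same route as the paper's (itself only sketched) proof: extract the witnessing sequence from the definition of independence, apply Proposition \ref{widehat{NF} with independence} to obtain $N_1^*$ with the sequence independent over a copy of $M_0$ and $\widehat{NF}(N_0,N_{\alpha^*},f[M_0],N_1^*)$, and then transfer back to $(M_0,M_1)$ via Weak Uniqueness of $\widehat{NF}$. The alignment issues you flag (the embedding $f$ versus the identity, matching the second coordinates of the two $\widehat{NF}$-quadruples before invoking Weak Uniqueness) are genuine but are exactly the routine invariance/monotonicity points the paper also leaves implicit.
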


\begin{proof}
The idea is to find an amalgam $M_2$ of $M_1^-$ and $M_0$ over $M^-_0$ such that $\widehat{NF}$ holds and the sequence is independent in $(M_0,M_2)$. It is sufficient since the relation $\widehat{NF}$ satisfies weak uniqueness (by complete...).

We elavorate. 
By the definition of independence, there is an increasing continuous sequence $\langle N_\alpha:\alpha \leq \alpha^* \rangle$ of models of cardinality $\lambda$ such that $N_0=M_0^-$, $M_1^- \preceq N_{\alpha^*}$ and the sequence $\langle M_\alpha,a_\alpha:\alpha<\alpha^* \rangle ^\frown \langle M_{\alpha^*} \rangle$ is independent over $M_0^-$. By Proposition \ref{widehat{NF} with independence}, for some $N_1^* \in K_{\lambda^+}$ the sequence $\langle a_\alpha:\alpha<\alpha^* \rangle$ is independent in $(M_0,N_1^*)$ and  $\widehat{NF}(N_0,N_{\alpha^*},M_0,N_1^*)$. 

But $\widehat{NF}(M_0^-,M_1^-,M_0,M_1)$. Therefore by the weak uniqueness of $\widehat{NF}$ (Fact \ref{complete...}) complete...

complete...
\end{proof}
 
\begin{theorem}\label{continuity of serial independence}\label{if the class of uniqueness triples satisfies the existence property then continuity}
The $\lambda^+$-continuity of serial independence property holds.
\end{theorem}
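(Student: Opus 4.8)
The plan is to derive the $\lambda^+$-continuity of serial independence property from the existence of the relation $\widehat{NF}$, exactly as the roadmap after Definition \ref{definition of the continuity of serial independence} suggests. Recall the setup: $\beta^*<\lambda^+$, $M\preceq N$ are in $K_{\lambda^+}$, $\langle M_\alpha:\alpha<\lambda^+\rangle$ is a filtration of $M$, and $\langle a_\beta:\beta<\beta^*\rangle$ is independent in $(M_\alpha,N)$ for every $\alpha<\lambda^+$; we must show it is independent in $(M_{\lambda^+},N)=(M,N)$. First I would invoke Lemma \ref{we can use NF} to get $M\preceq^{NF}_{\lambda^+}N$, and hence (by Remark \ref{remark preceq^{NF}}, choosing a filtration $\langle N_\alpha:\alpha<\lambda^+\rangle$ of $N$ extending $\langle M_\alpha\rangle$) a club $E\subseteq\lambda^+$ with $\widehat{NF}(M_\alpha,N_\alpha,M,N)$ for every $\alpha\in E$. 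The point of moving to a model-sized base $M_\alpha$ is that the hypothesis gives us independence of $\langle a_\beta\rangle$ in $(M_\alpha,N)$; but I actually want independence over a base of size $\lambda$ so that Proposition \ref{widehat{NF} respects independence} applies.

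So the second step is a reflection/cofinality argument: since $\beta^*<\lambda^+$ and each witness to independence of $\langle a_\beta:\beta<\beta^*\rangle$ in $(M_\alpha,N)$ can be taken inside a submodel of size $\lambda$, by local character of $\dnf$ (and the fact that $M_\alpha=\bigcup_{\gamma<\alpha}M_\gamma$ when $\alpha$ is limit) the independence of $\langle a_\beta\rangle$ in $(M_\alpha,N)$ already reflects down: for a club of $\alpha<\lambda^+$, $\langle a_\beta:\beta<\beta^*\rangle$ is independent in $(M_0^-,M_1^-)$ for suitable $M_0^-\preceq M_\alpha$, $M_1^-$ of size $\lambda$ with $M_0^-\preceq M_1^-$, and with $\widehat{NF}(M_0^-,M_1^-,M,N')$ for an appropriate $N'\preceq N$ — here I would shrink $E$ to the club where the filtration points of $N$ and $M$ line up with the independence witnesses. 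Intersecting all these clubs, pick a single $\alpha^\dagger\in E$ at which both $\widehat{NF}(M_{\alpha^\dagger},N_{\alpha^\dagger},M,N)$-type data and the $\lambda$-sized independence witness for $\langle a_\beta\rangle$ coexist.

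The third step is the actual transfer: apply Proposition \ref{widehat{NF} respects independence} (with $M_0^-,M_1^-$ the $\lambda$-sized pair from step two, $M_0=M$, $M_1=N$) to conclude that $\langle a_\beta:\beta<\beta^*\rangle$ is independent in $(M,N)=(M_{\lambda^+},N)$, which is exactly what is to be proved. The reduction "weaken the assumption to $\beta^*=2$" mentioned under Proposition \ref{continuity implies symmetry} is irrelevant here since Proposition \ref{widehat{NF} respects independence} already handles all $\alpha^*<\lambda^+$.

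I expect the main obstacle to be the bookkeeping in step two — namely producing, for a club of $\alpha$, a genuinely $\lambda$-sized base $M_0^-\preceq M_\alpha$ together with a $\lambda$-sized $M_1^-$ and the matching $\widehat{NF}$-configuration, rather than merely independence over the $\lambda^+$-sized $M_\alpha$. One has to argue that the increasing-continuous witnessing sequence from Definition \ref{definition of independence}(b) for $(M_\alpha,N)$ can be chosen so that its base belongs to $K_\lambda$ and is $\preceq M_\alpha$, and then that $\widehat{NF}(M_0^-,M_1^-,M,N)$ holds; this uses local character and monotonicity of $\dnf$ together with Fact \ref{the widehat{NF}-properties}(b),(c) and the club from Remark \ref{remark preceq^{NF}}. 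Once the right $\alpha^\dagger$ is isolated, the conclusion is immediate from Proposition \ref{widehat{NF} respects independence}.
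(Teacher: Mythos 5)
Your proposal follows essentially the same route as the paper: Lemma \ref{we can use NF} to get $M\preceq^{NF}_{\lambda^+}N$, Remark \ref{remark preceq^{NF}} to extract a club $E$ with $\widehat{NF}(M_\alpha,N_\alpha,M,N)$, then a single well-chosen $\alpha\in E$ fed into Proposition \ref{widehat{NF} implies independence}. The only difference is that your second step is over-engineered: since the filtration members $M_\alpha$ are already in $K_\lambda$, the paper needs no reflection or local-character argument there --- it simply uses $|\{a_\beta:\beta<\beta^*\}|<\lambda^+$ to pick $\alpha\in E$ with all the $a_\beta$ inside $N_\alpha$, after which Proposition \ref{widehat{NF} implies independence} applies directly.
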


\begin{proof}
Let $\beta^*<\lambda^+$, $M_1 \in K_{\lambda^+}$, $M_1 \preceq M_2 \in K_{\lambda^+}$ and let $\langle M_{1,\alpha}:\alpha<\lambda^+ \rangle$ be a filtration of $M_1$. Suppose $\langle a_\beta:\beta<\beta^* \rangle$ is independent in $(M_{1,\alpha},M_2)$ for each $\alpha<\lambda^+$. We have to prove that $\langle a_\beta:\beta<\beta^* \rangle$ is independent in $(M_1,M_2)$.

By Lemma \ref{we can use NF}, $M_1 \preceq^{NF}_{\lambda^+}M_2$. Let $\langle M_{2,\alpha}:\alpha<\lambda^+ \rangle$ be a filtration of $M_2$. By Remark \ref{remark preceq^{NF}}, there is a club $E$ of $\lambda^+$ such that for every $\alpha \in E$, $\widehat{NF}(M_{1,\alpha},M_{2,\alpha},M_1,M_2)$.   Define $J=:\{a_\beta:\beta<\beta^*\}$. $J \subseteq M_2$. Since $|J|<\lambda^+$, for some $\alpha \in E$ we have $J \subseteq M_{2,\alpha}$. But 
$\widehat{NF}(M_{1,\alpha},N_{2,\alpha},M_1,M_2)$. So by Proposition \ref{widehat{NF} implies independence}, $\langle a_\beta:\beta<\beta^* \rangle$ is independent in $(M_1,M_2)$.
\end{proof}

\begin{proposition}\label{continuity implies symmetry}
Suppose:
\begin{enumerate}
\item $\frak{s}=(K,\preceq,S^{bs},\dnf)$ is a semi-good $\lambda$-frame \item $(K,\preceq)$ satisfies the amalgamation property in $[\lambda,\mu]$ \item $(K,\preceq)$ satisfies $(\lambda,\mu)$-tameness \item $\frak{s}$ satisfies the continuity of serial independence property.
\end{enumerate}
Then $\frak{s}^{\mu}$ satisfies symmetry.
\end{proposition}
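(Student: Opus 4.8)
The plan is to reduce the general $[\lambda,\mu]$ statement to the single step $\mu=\lambda^+$ already treated in the body of the paper, by induction on the cardinal $\mu$, and to organize the induction so that at each stage the hypotheses of the $\lambda^+$-type argument are available one cardinal below. First I would verify that the continuity of serial independence property is stable under the passage from $\frak{s}$ to $\frak{s}^+$: more precisely, if $\frak{s}$ is a semi-good $\lambda$-frame satisfying amalgamation and tameness up to $\mu$ and the continuity property, then $\frak{s}^+$ (a semi-good $\lambda^+$-frame, via Theorem~\ref{symmetry implies good-frame} once symmetry at $\lambda^+$ is established) again satisfies amalgamation in $[\lambda^+,\mu]$, $(\lambda^+,\mu)$-tameness, and the continuity of serial independence property. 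The amalgamation and tameness clauses are immediate restrictions of the hypotheses; the continuity clause requires the remark that independence and serial independence for $\frak{s}^+$ are the restrictions of the corresponding notions for $\frak{s}$ to models of cardinality $\geq\lambda^+$, so the continuity statement for $\frak{s}^+$ is a special case of the one for $\frak{s}$.

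Next I would run the induction. For $\mu=\lambda^+$: by Theorem~\ref{continuity of serial independence} the $\lambda^+$-continuity of serial independence property holds (this uses exactly hypotheses (1)--(4) specialized to $\mu=\lambda^+$, via the relation $NF$ and $\widehat{NF}$ and Lemma~\ref{we can use NF}), and then by Proposition~\ref{continuity implies symmetry} (the earlier one, numbered \ref{continuity implies symmetry}) symmetry holds for $\frak{s}^+=\frak{s}^{\lambda^+}$. For the successor step $\mu=\nu^+$ with $\lambda<\nu<\mu$: apply the induction hypothesis to get that $\frak{s}^{\nu}$ is a good $\nu$-frame with symmetry, hence (by the stability-preservation facts and the fact that a semi-good frame with symmetry, uniqueness and extension is good) $\frak{s}^{\nu}$ is itself a semi-good $\nu$-frame to which the base case applies with parameters $(\nu,\nu^+)$; the continuity of serial independence property for $\frak{s}^{\nu}$ follows from the one for $\frak{s}$ by the restriction remark above, and amalgamation in $[\nu,\nu^+]\subseteq[\lambda,\mu]$ and $(\nu,\nu^+)$-tameness are given; so the base case yields symmetry for $(\frak{s}^{\nu})^+=\frak{s}^{\mu}$. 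For $\mu$ a limit of such steps there is nothing extra to check, since symmetry for $\frak{s}^\mu$ refers only to models of cardinality exactly $\mu$ and the frame $\frak{s}^\mu$ with its non-forking relation on models of size $\mu$ is obtained by the lifting construction of Definition~\ref{forking for big models} directly from $\frak{s}$; one checks symmetry for such models using the $\widehat{NF}$-style argument at the cardinal $\mu$ exactly as in the successor case, the only input being amalgamation and tameness at $\mu$ (both available) and the continuity property. In fact the cleanest route avoids cases entirely: for arbitrary $\mu\in(\lambda,\mu]$ reprove Theorem~\ref{continuity of serial independence} with $\lambda$ replaced by (the predecessor structure feeding into) $\mu$, i.e.\ establish a $\mu$-continuity of serial independence property, and then invoke Proposition~\ref{continuity implies symmetry}; the relations $NF$, $\widehat{NF}$, $\preceq^{NF}_\mu$ and Lemma~\ref{we can use NF} all go through verbatim with $\lambda^+$ replaced by $\mu$, provided amalgamation and tameness hold in the relevant interval, which they do by hypothesis.

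I expect the main obstacle to be the bookkeeping needed to run the argument of Lemma~\ref{we can use NF} and Theorem~\ref{continuity of serial independence} at a general cardinal $\mu$ rather than at $\lambda^+$. Those proofs use filtrations of length $\lambda^+$, clubs in $\lambda^+$, and the fact that a set of size $<\lambda^+$ is absorbed into a bounded initial segment; for the general step one needs filtrations indexed by the cofinality of the relevant cardinal and the corresponding club/almost-everywhere machinery, together with the long-transitivity axiom for $NF$ applied to sequences of the appropriate length. The delicate point is that the construction of $NF$ on models of size $\mu$ (the analogue of $\widehat{NF}$, built from $NF$ on models of size $\lambda$ via the rectangle-of-models construction of Proposition~\ref{a rectangle of models with NF}) must be carried out inductively through all cardinals in $(\lambda,\mu)$, and one must check at each stage that the axioms $\bigotimes_{NF}$ and the respecting-the-frame clause persist — but this is exactly the content of the facts already cited (Facts~\ref{jrsh875.5.15}, \ref{the widehat{NF}-properties}, \ref{preceq^{NF}-properties} and Lemma~\ref{we can use NF}) applied one cardinal at a time, so no genuinely new idea is required beyond careful indexing. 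Once the $\mu$-continuity of serial independence property is in hand, the implication to symmetry for $\frak{s}^\mu$ is immediate from Proposition~\ref{continuity implies symmetry}, completing the proof.
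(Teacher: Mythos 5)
Your proposal takes a fundamentally different route from the paper, and the route has a genuine gap. The paper's proof is a direct, one-step argument that never leaves the base frame: given $N_0\preceq N_1\preceq N_2$ of cardinality $\mu$ with $tp(a,N_0,N_1)$ basic and $tp(b,N_1,N_2)$ non-forking over $N_0$, one chooses filtrations $\langle M_{i,\alpha}:\alpha<\lambda^+\rangle$ of the $N_i$ into $\lambda$-sized pieces, arranges (by local character and monotonicity) that the relevant types do not fork over $M_{0,0}$ and that $a,b$ lie in the first pieces, observes that $\langle a,b\rangle$ is then independent in $(M_{0,0},M_{0,\alpha},M_{2,\alpha})$ for every $\alpha$, applies the \emph{symmetry axiom of the semi-good $\lambda$-frame $\frak{s}$} piecewise to get that $\langle b,a\rangle$ is independent in each $(M_{0,0},M_{0,\alpha},M_{2,\alpha})$, and finally invokes the continuity of serial independence property (hypothesis (4)) exactly once to conclude that $\langle b,a\rangle$ is independent in $(N_0,N_2)$, which is symmetry for $\frak{s}^{\mu}$. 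No induction on cardinals, no rebuilding of $NF$ or $\widehat{NF}$, and no re-derivation of the continuity property is involved: in this proposition the continuity property is an assumption, not something to be proved, and the $NF$ machinery appears in the paper only to establish that assumption elsewhere (Theorem~\ref{continuity of serial independence}), under the additional Hypothesis~\ref{hypothesis for bar{NF}} which is not among the hypotheses here.

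The concrete gap in your induction is the claim that the continuity of serial independence property for $\frak{s}^{+}$ (or $\frak{s}^{\nu}$) ``is a special case of the one for $\frak{s}$ by restriction.'' It is not. The $\lambda^+$-continuity property of Definition~\ref{definition of the continuity of serial independence} quantifies over filtrations of a $\lambda^+$-sized model into $\lambda$-sized pieces; the property you would need to feed your successor step for $\frak{s}^{\nu}$ quantifies over filtrations of a $\nu^+$-sized model into $\nu$-sized pieces. The latter is a strictly different statement, and the only route the paper offers to it is to reconstruct the whole $NF$, $\widehat{NF}$, $\preceq^{NF}$ apparatus one cardinal up, which in turn requires the existence property for uniqueness triples in $\frak{s}^{\nu}$ --- an assumption that is nowhere available and is known to be nontrivial. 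Your fallback of ``reproving Theorem~\ref{continuity of serial independence} verbatim at $\mu$'' runs into the same obstruction, and additionally smuggles in hypotheses (the existence of an $NF$ relation respecting the frame) that the proposition deliberately avoids. If you instead notice that hypothesis (4) already hands you the bridge between the $\lambda$-level and the $\mu$-level, the entire induction collapses to the short direct argument above.
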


\begin{proof}

Suppose $N_0,N_1,N_2$ are models of cardinality $\mu$, $N_0 \preceq N_1 \preceq N_2$, the type $tp(a,N_0,N_1)$ is basic and $tp(b,N_1,N_2)$ does not fork over $N_0$. Let $\langle M_{0,\alpha}:\alpha<\lambda^+ \rangle$, $\langle M_{1,\alpha}:\alpha<\lambda^+ \rangle$, $\langle M_{2,\alpha}:\alpha<\lambda^+ \rangle$ be filtrations of $N_0,N_1,N_2$ respectively such that $M_{0,\alpha} \preceq M_{1,\alpha} \preceq M_{2,\alpha}$ for each $\alpha<\lambda^+$. Without loss of generality, the types $tp(a,N_0,N_1)$, $tp(b,N_0,N_2)$ do not fork over $M_{0,0}$, $a \in M_{1,0}$ and $b \in M_{2,0}$.

The sequence $\langle a,b \rangle$ is independent in $(M_{0,0},M_{0,\alpha},M_{2,\alpha})$ for each $\alpha<\lambda^+$. 
So by symmetry in $\frak{s}$, the sequence $\langle b,a \rangle$ is independent in $(M_{0,0},M_{0,\alpha},M_{2,\alpha})$ for each $\alpha<\lambda^+$. By Theorem \ref{continuity of serial independence} (the $\lambda^+$-continuity of serial independence property), $\langle b,a \rangle$ is independent in $(N_0,N_2)$.
\end{proof}

\begin{corollary}\label{corollary continuity implies good-frame}
Suppose:
\begin{enumerate}
\item $\frak{s}=(K,\preceq,S^{bs},\dnf)$ is a semi-good $\lambda$-frame \item $(K,\preceq)$ satisfies the amalgamation property in $[\lambda,\mu]$ 
\item $(K,\preceq)$ satisfies $(\lambda,\lambda^+)$-tameness 
\item $\frak{s}$ satisfies the continuity of serial independence property.
\end{enumerate}
Then $\frak{s}^{+}$ is a good $\lambda^+$-frame.
\end{corollary}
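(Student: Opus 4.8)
The plan is to obtain this as the specialization $\mu=\lambda^+$ of Proposition~\ref{continuity implies symmetry} combined with Theorem~\ref{symmetry implies good-frame}. First I would note that, taking $\mu=\lambda^+$, hypotheses (1)--(4) of the corollary are precisely hypotheses (1)--(4) of Proposition~\ref{continuity implies symmetry} (the version concluding symmetry for $\frak{s}^{\mu}$): (1) and (4) coincide, (2) gives amalgamation in $[\lambda,\lambda^+]$, and (3) is $(\lambda,\lambda^+)$-tameness. That proposition then yields that $\frak{s}^{\lambda^+}=\frak{s}^{+}$ satisfies symmetry.

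Next I would verify that Hypotheses~\ref{hypothesis 1} and~\ref{hypothesis 2} are in force, so that Theorem~\ref{symmetry implies good-frame} applies: Hypothesis~\ref{hypothesis 1} is hypothesis (1) of the corollary; Hypothesis~\ref{hypothesis 2}(1), amalgamation in $\lambda^+$, is the top-cardinal instance of hypothesis (2); and Hypothesis~\ref{hypothesis 2}(2), $(\lambda,\lambda^+)$-tameness, is hypothesis (3). Hence, with the symmetry of $\frak{s}^{+}$ just established, Theorem~\ref{symmetry implies good-frame} gives that $\frak{s}^{+}$ is a good non-forking $\lambda^+$-frame, which is the claim.

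The only delicate point is the bookkeeping of ambient hypotheses. One must check that neither Proposition~\ref{continuity implies symmetry} nor Theorem~\ref{symmetry implies good-frame} relies on anything beyond hypotheses (1)--(4): in particular, the appeal in the proof of Proposition~\ref{continuity implies symmetry} to Theorem~\ref{continuity of serial independence} (which was itself derived from Hypothesis~\ref{hypothesis for bar{NF}}, the existence of a relation $NF$ respecting $\frak{s}$) should here be replaced by a direct invocation of hypothesis (4), so that the $NF$-machinery is bypassed entirely and the corollary genuinely rests on weaker assumptions. With that substitution the argument is purely a matter of aligning hypotheses, and I expect no new difficulty; the hard part is simply confirming that the chain of implications does not loop back through the existence of $NF$.
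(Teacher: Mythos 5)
Your proof is correct and follows exactly the paper's own route: the paper proves this corollary by citing Proposition~\ref{continuity implies symmetry} (with $\mu=\lambda^+$) together with Theorem~\ref{symmetry implies good-frame}, which is precisely your decomposition. Your extra care in checking that Hypotheses~\ref{hypothesis 1} and~\ref{hypothesis 2} are supplied by clauses (1)--(3) and that the argument need not route through the $NF$-machinery is sound bookkeeping, but it does not change the approach.
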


\begin{proof}
By Proposition \ref{continuity implies symmetry} and Theorem \ref{symmetry implies good-frame}.
\end{proof}

\begin{proof}
By \ref{continuity implies symmetry}, symmetry holds. complete...
\end{proof}

In complete... we generalize the following corollary, eliminating the tameness assumption:
\begin{corollary}
Suppose:
\begin{enumerate}
\item $\frak{s}=(K,\preceq,S^{bs},\dnf)$ is a semi-good $\lambda$-frame \item $(K,\preceq)$ satisfies the amalgamation property in $[\lambda,\mu]$ 
\item $(K,\preceq)$ satisfies $(\lambda,\mu)$-tameness 
\item the class of uniqueness triples satisfies the existence property.
\end{enumerate}
Then $\frak{s}^{\mu}$ is a good $\lambda$-frame.
\end{corollary}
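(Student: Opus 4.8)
The plan is to reduce this corollary to the earlier results by a two-step argument: first produce a relation $NF$ on models from the existence property, then invoke the chain of results that lead from such a relation to a good $\lambda^+$-frame. Concretely, the last statement is the $(\lambda,\mu)$-tame analogue of Corollary~\ref{corollary continuity implies good-frame}, with hypothesis (3) there (the continuity of serial independence property) replaced by hypothesis (4) here (the existence property for uniqueness triples). So the first step is to recover the continuity of serial independence property from the existence property, and then quote Corollary~\ref{corollary continuity implies good-frame} (or, more directly, Theorem~\ref{continuity of serial independence} together with Proposition~\ref{continuity implies symmetry} and Theorem~\ref{symmetry implies good-frame}).

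For the first step I would argue as follows. By Fact~\ref{jrsh875.5.15}, since the class of uniqueness triples satisfies the existence property, there is a (unique) relation $NF \subseteq \ ^4K_\lambda$ satisfying $\bigotimes_{NF}$ and respecting the frame $\frak{s}$. This means Hypothesis~\ref{hypothesis for bar{NF}} is met, so the relation $\widehat{NF}$ of Definition~\ref{definition of widehat{NF}} and the partial order $\preceq^{NF}_{\lambda^+}$ of Definition~\ref{6.1} are available, together with their basic properties (Facts~\ref{the widehat{NF}-properties},~\ref{preceq^{NF}-properties}), Lemma~\ref{we can use NF} (which identifies $\preceq^{NF}_{\lambda^+}$ with $\preceq$ on $K_{\lambda^+}$), and Proposition~\ref{widehat{NF} respects independence}. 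Feeding these into Theorem~\ref{continuity of serial independence} gives that the $\lambda^+$-continuity of serial independence property holds. At this point all four hypotheses of Proposition~\ref{continuity implies symmetry} are satisfied — (1) and (2) are given, (3) follows since $(\lambda,\mu)$-tameness implies $(\lambda,\lambda^+)$-tameness when $\mu>\lambda$ (and the case $\mu=\lambda$ is the hypothesis of the frame itself, so there is nothing to prove), and (4) is what we just established — hence $\frak{s}^{\mu}$ satisfies symmetry.

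For the second step, once symmetry is in hand I would invoke Theorem~\ref{symmetry implies good-frame} (in the version for the interval $[\lambda,\mu]$): under the amalgamation property in $[\lambda,\mu]$, $(\lambda,\mu)$-tameness, and symmetry for $\frak{s}^\mu$, uniqueness (Proposition~\ref{uniqueness in s^+}), extension (Proposition~\ref{extension in s^+}) and basic stability (Proposition~\ref{stability in s^+}) all lift to $\frak{s}^\mu$, and these together with symmetry are exactly what is needed (by the Fact following Definition~\ref{basic for big models}) for $\frak{s}^\mu$ to be a good non-forking frame. That finishes the proof. The only remaining care is bookkeeping about the cardinality parameter $\mu$: the results in the excerpt are stated for $\lambda^+$, but each of Lemma~\ref{we can use NF}, Theorem~\ref{continuity of serial independence}, and Theorem~\ref{symmetry implies good-frame} goes through verbatim with $\lambda^+$ replaced by $\mu$ throughout, using amalgamation and tameness in $[\lambda,\mu]$ rather than just in $\lambda^+$; I would simply note that the proofs are unchanged.

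The main obstacle, such as it is, is not in any single deduction but in checking that the machinery of $\widehat{NF}$ and $\preceq^{NF}_{\lambda^+}$ is genuinely available in the stated generality — i.e.\ that Fact~\ref{jrsh875.5.15} really does deliver a relation $NF$ respecting $\frak{s}$ from the bare existence property for uniqueness triples, with no hidden stability or categoricity assumption. Granting that (it is quoted from \cite{jrsh875}), the corollary is a straightforward assembly of Theorem~\ref{continuity of serial independence}, Proposition~\ref{continuity implies symmetry}, and Theorem~\ref{symmetry implies good-frame}, and indeed it is essentially Corollary~\ref{corollary continuity implies good-frame} with its continuity hypothesis replaced by the (stronger, via Theorem~\ref{continuity of serial independence}) existence-property hypothesis.
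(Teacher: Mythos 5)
Your proposal is correct and follows essentially the same route as the paper: the paper's proof is precisely ``by Theorem~\ref{if the class of uniqueness triples satisfies the existence property then continuity} and Corollary~\ref{corollary continuity implies good-frame}'', i.e.\ existence of uniqueness triples $\Rightarrow$ (via Fact~\ref{jrsh875.5.15} and the $\widehat{NF}$ machinery) the continuity of serial independence property $\Rightarrow$ symmetry $\Rightarrow$ good frame, which is exactly the chain you unfold. Your added remark about replacing $\lambda^+$ by $\mu$ throughout is a point the paper's one-line proof silently glosses over, but it does not change the argument.
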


\begin{proof}
By Theorem \ref{if the class of uniqueness triples satisfies the existence property then continuity} and  Corollary \ref{corollary continuity implies good-frame}
\end{proof}

\section{Combining Uniqueness Triples with the Amalgamation Property in $\lambda^+$}

\begin{theorem}
If 
\begin{enumerate}
\item the class of uniqueness triples satisfies the existence property,
\item the amalgamation property in $\lambda^+$ holds.
\end{enumerate}
The $\frak{s}^+$ is a good non-forking $\lambda^+$-frame.
\end{theorem}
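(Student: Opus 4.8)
The plan is to verify the axioms of a good non-forking $\lambda^+$-frame for $\frak{s}^+$; by the Fact stated immediately after Definition \ref{basic for big models} it is enough to establish, for $\frak{s}^+$, basic stability, uniqueness, extension, and symmetry, since the remaining axioms lift from $\frak{s}$. Hypothesis (1) enters only through Fact \ref{jrsh875.5.15}: the existence property for uniqueness triples produces a relation $NF\subseteq {}^4K_\lambda$ with $\bigotimes_{NF}$ respecting $\frak{s}$, so Hypothesis \ref{hypothesis for bar{NF}} holds, and with it we have $\widehat{NF}$, $\preceq^{NF}_{\lambda^+}$ and their properties (Facts \ref{the widehat{NF}-properties}, \ref{preceq^{NF}-properties}). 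Hypothesis (2), amalgamation in $\lambda^+$, is what powers Lemma \ref{we can use NF}, so that $\preceq$ agrees with $\preceq^{NF}_{\lambda^+}$ on $K_{\lambda^+}$. In the proof of Theorem \ref{the main theorem} tameness was used only in Propositions \ref{uniqueness in s^+} and \ref{stability in s^+}; Proposition \ref{extension in s^+} uses only extension and continuity of $\frak{s}$ together with monotonicity, and its proof applies here verbatim. So the real work is to re-derive uniqueness and basic stability for $\frak{s}^+$ from the $NF$-data instead of from tameness, and then to recover symmetry.

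For uniqueness, let $M_0\preceq M_1$ with $M_1\in K_{\lambda^+}$ (allowing $M_0\in K_\lambda$), and let $p,q\in S^{bs}(M_1)$ both not fork over $M_0$ with $p\restriction M_0=q\restriction M_0$. By monotonicity we may fix $N_0\in K_\lambda$, $N_0\preceq M_0$, over which both $p$ and $q$ do not fork, and set $p^-:=p\restriction N_0=q\restriction N_0\in S^{bs}(N_0)$. Since $p^-$ is non-algebraic, realize it by some $b$ in a $\lambda$-extension $N_0^+$ of $N_0$ chosen so that $N_0^+\cap M_1=N_0$. The crux is to realize $p$ \emph{itself} (not merely some non-forking extension of $p^-$) by an element $a$ inside a model $M_1^p\succeq M_1$ sitting in the $\widehat{NF}$-configuration $\widehat{NF}(N_0,N_0^+,M_1,M_1^p)$ over $M_1$, with $a$ corresponding to $b$; this is obtained from extension for $\widehat{NF}$ (Fact \ref{the widehat{NF}-properties}(c)) and a back-and-forth along filtrations, in the spirit of Lemma \ref{we can use NF} and of \cite{shh}.III. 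Carrying out the same for $q$ gives $a'\in M_1^q$ with $\widehat{NF}(N_0,N_0^+,M_1,M_1^q)$; weak uniqueness of $\widehat{NF}$ (Fact \ref{the widehat{NF}-properties}(d)) then yields a joint embedding of $M_1^p,M_1^q$ over $M_1\cup N_0^+$ identifying $a$ with $a'$, so $p=q$. I expect this step to be the main obstacle — it is the single place where the relation $NF$ is used essentially — and one may instead quote the derivation of the good $\lambda^+$-frame over the saturated models with $\preceq^{NF}_{\lambda^+}$ from \cite{shh}.III or \cite{jrsh875} and transfer it via Lemma \ref{we can use NF}.

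Basic stability for $\frak{s}^+$ is then a counting argument using uniqueness: fixing a filtration $\langle M_\alpha:\alpha<\lambda^+\rangle$ of $M\in K_{\lambda^+}$, every $p\in S^{bs}(M)$ does not fork over some $M_\alpha$ (the $\lambda$-submodel witnessing basicity is contained in some $M_\alpha$, and one applies monotonicity), and by uniqueness $p$ is the unique non-forking extension of $p\restriction M_\alpha$ to $M$; hence $p\mapsto(\alpha,p\restriction M_\alpha)$ embeds $S^{bs}(M)$ into $\lambda^+\times\bigcup_{\alpha<\lambda^+}S^{bs}(M_\alpha)$, of size $\le\lambda^+$ by the basic almost stability of $\frak{s}$. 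Finally, for symmetry: with uniqueness in $\lambda^+$ in hand, all hypotheses of Lemma \ref{we can use NF} hold, so $\preceq$ and $\preceq^{NF}_{\lambda^+}$ coincide on $K_{\lambda^+}$; then Propositions \ref{a rectangle of models with NF}, \ref{widehat{NF} with independence} and \ref{widehat{NF} respects independence}, and hence Theorem \ref{continuity of serial independence}, go through (their proofs use the $NF$-machinery but not tameness), so the $\lambda^+$-continuity of serial independence property holds, and by Proposition \ref{continuity implies symmetry} symmetry follows. Having basic stability, uniqueness, extension, and symmetry, $\frak{s}^+$ is a good non-forking $\lambda^+$-frame by the Fact following Definition \ref{basic for big models}, which completes the proof.
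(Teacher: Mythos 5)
Your decomposition is sound and, as far as one can tell from the paper's fragmentary text, it follows the route of the paper's \emph{first} proof: reduce to basic stability, uniqueness, extension and symmetry via the Fact after Definition \ref{basic for big models}; observe that Proposition \ref{extension in s^+} never used tameness; get $NF$ from Fact \ref{jrsh875.5.15}; and recover symmetry from Lemma \ref{we can use NF}, Theorem \ref{continuity of serial independence} and Proposition \ref{continuity implies symmetry} once uniqueness is in hand. You are also right that the entire weight of the theorem rests on re-deriving uniqueness for $\frak{s}^+$ from the $NF$-machinery instead of from tameness (the paper's first proof says exactly ``it is sufficient to prove uniqueness'' and then stops), and your counting argument for basic stability from uniqueness is fine. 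The paper's second proof takes a different, shorter route — citing \cite{jrsh875} to reduce everything to smoothness of $\preceq^{NF}_{\lambda^+}$ and then invoking Lemma \ref{we can use NF} — but that route runs into the same issue described below.

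The genuine gap is in your uniqueness step, and you have correctly located it yourself. Extension for $\widehat{NF}$ (Fact \ref{the widehat{NF}-properties}(c)) builds a \emph{new} model $M_1^p$ over $M_1$ in which the element $b$ realizes the \emph{canonical} non-forking extension of $p^-=p\restriction N_0$, i.e.\ the type produced level-by-level along the filtration by ``$NF$ respects $\frak{s}$'' plus uniqueness in $\lambda$. What you need is that the \emph{given} type $p$ is realized in such a configuration, and ``a back-and-forth along filtrations'' does not deliver this: a given realization $a\in M'\succeq M_1$ of $p$ need not admit an $NF$-decomposition over the filtration of $M_1$ (some element of the candidate small model $N_0^+$ may have a type over $M_{1,\alpha+1}$ that forks over $M_{1,\alpha}$), so one cannot place $(N_0,N_0^+,M_1,M')$ into $\widehat{NF}$-position. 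Showing that every $\preceq$-extension in $K_{\lambda^+}$ is an $NF$-decomposable one is precisely the content of Lemma \ref{we can use NF} — and the proof of that lemma (Claim \ref{importance of maximal}) explicitly invokes ``uniqueness of non-forking in $\lambda^+$,'' which in this paper comes from tameness. So your fallback — quote Lemma \ref{we can use NF} or the $\preceq^{NF}_{\lambda^+}$-results of \cite{shh}.III/\cite{jrsh875} and transfer — is circular as stated: you would be using uniqueness in $\lambda^+$ to prove uniqueness in $\lambda^+$. What is needed, and what neither your sketch nor the paper's ``complete\dots'' placeholders supply, is a direct argument (presumably exploiting the uniqueness-triple property itself, not just the derived relation $NF$, together with amalgamation in $\lambda^+$) that two non-forking extensions of $p^-$ to $M_1$ agreeing on $N_0$ are conjugate over $M_1$. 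Until that single step is supplied, the proof is incomplete.
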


We give two proofs. The first one is similar to the proof  Corollary \ref{complete...}, using a replacement of $(\lambda,\lambda^+)$-tameness. 
\begin{proof}
By complete... it is sufficient to prove uniqueness.
\end{proof}

Now we give a second proof, using results from \cite{jrsh875}:
\begin{proof}
By \cite[complete...]{jrsh875}, it is sufficient to prove that the relation $\preceq^{NF}_{\lambda^+}$ satisfies smoothness. But by Lemma \ref{we can use NF}, the relation $\preceq^{NF}_{\lambda^+}$ is equivalent to $\preceq \restriction K_{\lambda^+}$, so by the definition of AEC, it satisfies smoothness.
\end{proof}

\section{Solving the Goodness$^+$ Question}
Some years ago Shelah asked whether for every successful good $\lambda$-frame, $$M_1 \preceq M_2 \Leftrightarrow M_1 \preceq^{NF}_{\lambda^+} M_2$$
holds for every two models $M_1,M_2$ which are saturated in $\lambda^+$ over $\lambda$.

In \cite{shh}.last,complete... he proved it under the assumption that there are no many models of cardinality complete..
Here we solve this open question:
\begin{corollary}
complete...
\end{corollary}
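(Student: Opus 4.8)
The plan is to obtain the corollary as an immediate consequence of Lemma~\ref{we can use NF}. Recall that, by Definition~\ref{6.1}, the relation $M_1 \preceq^{NF}_{\lambda^+} M_2$ already presupposes $M_1 \preceq M_2$; so the implication ``$M_1 \preceq^{NF}_{\lambda^+} M_2 \Rightarrow M_1 \preceq M_2$'' holds trivially. For the reverse implication, I would take $M_1, M_2 \in K_{\lambda^+}$ with $M_1 \preceq M_2$ and apply Lemma~\ref{we can use NF} to conclude $M_1 \preceq^{NF}_{\lambda^+} M_2$. Hence $M_1 \preceq M_2 \Leftrightarrow M_1 \preceq^{NF}_{\lambda^+} M_2$ for \emph{all} $M_1, M_2 \in K_{\lambda^+}$, and in particular for any two models that are saturated in $\lambda^+$ over $\lambda$ --- which answers Shelah's question affirmatively.

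The point worth stressing is the strength of the conclusion relative to \cite{shh}: the argument makes no use of a few-models hypothesis such as $I(\lambda^{++},K) < 2^{\lambda^{++}}$, and it is confined neither to saturated models nor to $\preceq^{NF}_{\lambda^+}$-increasing chains, since Lemma~\ref{we can use NF} is proved for arbitrary members of $K_{\lambda^+}$. The trade-off is that the proof of that lemma rests on Hypothesis~\ref{hypothesis 2} (amalgamation in $\lambda^+$ and $(\lambda,\lambda^+)$-tameness): amalgamation is encoded in the very definition of the class $A$ of pairs and in the construction of $<_A$-extensions, while tameness enters through uniqueness of the non-forking extension in $\lambda^+$ used inside Claim~\ref{importance of maximal}. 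So the write-up of the corollary itself should be a single short paragraph citing Lemma~\ref{we can use NF} and the definition of $\preceq^{NF}_{\lambda^+}$, followed by the remark about the improved hypotheses.

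Accordingly, the main obstacle is not located in this corollary at all --- it has been entirely absorbed into Lemma~\ref{we can use NF}. Were one to unwind a self-contained proof, the delicate core would be exactly the step carried out there: that a $<_A$-maximal pair $(N,N^+)$ must satisfy $N = N^+$ (combining density of basic types in $\lambda^+$, the extension property for $\widehat{NF}$ from Fact~\ref{the widehat{NF}-properties}(c), and uniqueness of non-forking in $\lambda^+$), together with the impossibility of a $\lambda^{++}$-long $<_A$-increasing continuous chain, which is ruled out by a filtration and pressing-down argument on $\lambda^{++}$ as in Claim~\ref{<_A satisfies axiom c} and the paragraph following it.
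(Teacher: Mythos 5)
Your final step---quoting Lemma~\ref{we can use NF} to identify $\preceq\restriction K_{\lambda^+}$ with $\preceq^{NF}_{\lambda^+}$---is indeed how the paper concludes, but you have misplaced the actual content of this corollary. Shelah's goodness$^+$ question is posed for a successful good $\lambda$-frame, restricted to the models saturated in $\lambda^+$ over $\lambda$; in that setting amalgamation in $\lambda^+$ is \emph{not} among the given hypotheses, and simply importing Hypothesis~\ref{hypothesis 2} begs the question. The paper's proof spends its entire effort manufacturing the missing input to Lemma~\ref{we can use NF}: restricting to the saturated models gives categoricity in $\lambda^+$, and categoricity in $\lambda^+$ together with the few-models hypothesis $I(\lambda^{++},K)<2^{\lambda^{++}}$ yields amalgamation in $\lambda^+$ by the standard non-structure fact; only then is the lemma applied. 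Your explicit assertion that ``the argument makes no use of a few-models hypothesis such as $I(\lambda^{++},K)<2^{\lambda^{++}}$'' is therefore exactly backwards for this corollary: that hypothesis is the engine that produces amalgamation in $\lambda^+$, and it is the reason the corollary is a genuine statement about successful frames rather than a restatement of Lemma~\ref{we can use NF} under the standing hypotheses of the earlier sections.

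A secondary point: Lemma~\ref{we can use NF} is proved under $(\lambda,\lambda^+)$-tameness as well (you correctly locate its use in the uniqueness step of Claim~\ref{importance of maximal}), so a fully careful version of the corollary must also explain why the relevant uniqueness of non-forking extensions is available over the saturated models of a successful frame, rather than citing Hypothesis~\ref{hypothesis 2}. Your observations about the internal mechanics of Lemma~\ref{we can use NF} (maximal pairs, the impossibility of a $\lambda^{++}$-long $<_A$-increasing chain via the filtration argument) are accurate, but they are beside the point here, since that lemma is quoted, not reproved; what the corollary needs from you is the bridge from Shelah's hypotheses to the lemma's hypotheses, and that bridge is missing from your write-up.
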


\begin{proof}
Since we restrict ourselves to the saturated models in $\lambda^+$ over $\lambda$, we have categoricity in $\lambda^+$. But we assume $I(\lambda^{++},K)<2^{\lambda^{++}}$. Hence, by Fact \ref{complete...}, we have amalgamation in $\lambda^+$. So by Lemma \ref{we can use NF}, the relations $\preceq \restriction K_{\lambda^+}$ and $\preceq^{NF}_{\lambda^+}$ are equivalent.
\end{proof}

\section{Dimenstion}
In \cite{jrsi3} we proved that if the continuity property holds then the dimension is well-behaved. First note that the continuity property in \cite{jrsi3} relates to independence of sets, while the continuity property here relates to serial independence. In this sense, we study here a stronger continuity property.

While in \cite{jrsi3}, we study $\frak{s}$ (relating to models of cardinality $\lambda$), here we study $\frak{s}^+$. While in \cite{jrsi3}, we assume that the class of uniqueness triples (in $\lambda$) satisfies the existence property, we do not assume the same for $\lambda^+$ (but assume the same assumption). This cause a difficulty.

But by the $\lambda^+$-continuity of serial independence property, we can prove that the dimension is semi-well-behaved in the following sense:
\begin{theorem}
Suppose: complete..., and the class of uniqueness triples in $\lambda$ satisfies the existence property. Let $M,N$ be models of cardinality $\lambda^+$ with $M \preceq N$ and let $J_1,J_2$ be two maximal independent sets in $(M,N)$. Then one of the following hold:
\begin{enumerate}
\item $J_1,J_2$ are finite sets, \item $|J_1|+|J_2|=\lambda^+$ or \item $|J_1|=|J_2|$.
\end{enumerate}
\end{theorem}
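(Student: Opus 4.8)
The plan is to deduce this dimension statement from the $\lambda^+$-continuity of serial independence property (Theorem~\ref{continuity of serial independence}), mimicking the argument of \cite{jrsi3} but working with $\frak{s}^+$ in place of $\frak{s}$. First I would set up the basic framework: recall from \cite{jrsi3} that a set $J \subseteq N-M$ is \emph{independent in $(M,N)$} if every finite subsequence of (some enumeration of) $J$ is independent in the sense of Definition~\ref{definition of independence}, and that a maximal such set is one not properly contained in another independent set in $(M,N)$. The goal is the trichotomy: either both $J_1,J_2$ are finite, or $|J_1|+|J_2|=\lambda^+$, or $|J_1|=|J_2|$.

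The key steps, in order. Step one: reduce to the case where $J_1$ and $J_2$ are both infinite and $|J_1|,|J_2| \le \lambda$ (the only remaining case, since if one of them has size $\lambda^+$ we land in clause (2), and if both are finite we are in clause (1)); I then want to show $|J_1|=|J_2|$, i.e.\ both have size $\le \lambda$ forces equality — actually more carefully, I want to show that two infinite maximal independent sets that are not both of size $\lambda^+$ have equal cardinality. Step two: filter everything. Choose a filtration $\langle M_\alpha : \alpha<\lambda^+\rangle$ of $M$ and a filtration $\langle N_\alpha:\alpha<\lambda^+\rangle$ of $N$ with $M_\alpha \preceq N_\alpha$, $M_\alpha \cap N = M$ on a club, and (using $M \preceq^{NF}_{\lambda^+} N$ from Lemma~\ref{we can use NF} together with Remark~\ref{remark preceq^{NF}}) $\widehat{NF}(M_\alpha,N_\alpha,M,N)$ on a club $E$. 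Also arrange that the enumerations of $J_1,J_2$ are absorbed: on a club, $J_1 \cap N_\alpha$ and $J_2 \cap N_\alpha$ are maximal independent sets in $(M_\alpha,N_\alpha)$ — here I use the $\lambda^+$-continuity of serial independence property to push maximality and independence down to (and back up from) the pieces of the filtration. Step three: apply the dimension result of \cite{jrsi3} \emph{at each level $\alpha \in E$}: since the class of uniqueness triples in $\lambda$ satisfies the existence property, the continuity property for $\frak{s}$ holds, so in each $(M_\alpha,N_\alpha)$ any two maximal independent sets have the same cardinality (or are finite); hence $|J_1 \cap N_\alpha| = |J_2 \cap N_\alpha|$ for a club of $\alpha$. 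Step four: take the union. Since $J_i = \bigcup_{\alpha \in E} (J_i \cap N_\alpha)$ and the sizes match levelwise along a club, a standard cardinal-arithmetic / club-intersection argument gives $|J_1| = |J_2|$, unless the levelwise sizes are eventually unbounded in $\lambda^+$, in which case $|J_1| = |J_2| = \lambda^+$ and we are in clause (2) after all, or eventually finite, giving clause (1).

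The main obstacle I expect is \textbf{Step two}: showing that maximality of $J_i$ in $(M,N)$ descends, on a club, to maximality of $J_i \cap N_\alpha$ in $(M_\alpha,N_\alpha)$, and conversely that levelwise maximality reassembles to global maximality. Descent of \emph{independence} is clear from monotonicity of $\widehat{NF}$ and Proposition~\ref{widehat{NF} respects independence}; the delicate direction is maximality, which is exactly where the $\lambda^+$-continuity of serial independence property is doing real work — if $J_i \cap N_\alpha$ failed to be maximal in $(M_\alpha,N_\alpha)$ for club-many $\alpha$, one would like to select witnesses $c_\alpha \in N_\alpha - M_\alpha$ with $J_i \cap N_\alpha \cup \{c_\alpha\}$ independent, and then argue (via a pressing-down / coherence argument and the continuity property) that some single $c$ contradicts maximality of $J_i$ in $(M,N)$. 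Managing the bookkeeping so that the $c_\alpha$ cohere into a global witness, and handling the case distinction between the three clauses uniformly, is the part that requires care; everything else is routine club manipulation plus citation of \cite{jrsi3} and the facts already established in this paper.
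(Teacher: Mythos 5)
The paper does not actually contain a proof of this theorem --- both the hypothesis list and the proof body are left as ``complete...'' placeholders --- so there is nothing to compare your proposal against; I can only assess it on its own terms. Your overall strategy (use Lemma~\ref{we can use NF} and Remark~\ref{remark preceq^{NF}} to get $\widehat{NF}(M_\alpha,N_\alpha,M,N)$ on a club, descend to the $\lambda$-level where the \cite{jrsi3} dimension theorem is available via the existence property for uniqueness triples, and reassemble) is reasonable and is very likely the intended route, given that the surrounding text explicitly flags the transfer from $\frak{s}$ to $\frak{s}^+$ as the difficulty. However, your case analysis is organized around the wrong picture, and this both creates unnecessary obstacles and hides the one real one. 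The trichotomy means the only case requiring an argument is when neither $J_i$ has cardinality $\lambda^+$ (otherwise clause (2) holds) and not both are finite; then $|J_1|,|J_2|\le\lambda$, so $J_1\cup J_2\subseteq N_\alpha$ for every sufficiently large $\alpha$ in your club, and one may also arrange $N_\alpha\cap M=M_\alpha$. At such a single level $\alpha$, descent of maximality is immediate: a witness $c\in N_\alpha-M_\alpha$ with $J_i\cup\{c\}$ independent in $(M_\alpha,N_\alpha)$ lifts, by Proposition~\ref{widehat{NF} respects independence}, to a witness against maximality of $J_i$ in $(M,N)$. Your Step four (levelwise sizes ``eventually unbounded'', club-intersection cardinal arithmetic) and the pressing-down/coherence machinery for gluing witnesses $c_\alpha$ across levels are therefore not needed and should be deleted; they address a case (both $J_i$ of size $\lambda^+$) in which there is nothing to prove.

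The genuine gaps are elsewhere. First, the theorem and \cite{jrsi3} speak of independent \emph{sets}, while Definition~\ref{definition of independence} and Proposition~\ref{widehat{NF} respects independence} are stated for \emph{sequences}; transferring independence and maximality between the two notions (e.g.\ that independence of a set of size $\le\lambda$ is witnessed by some, equivalently every, enumeration of length $<\lambda^+$) is exactly the set-versus-serial distinction the paper itself warns about in this section, and your sketch does not address it. Second, you must check that the \cite{jrsi3} dimension theorem, applied in $(M_\alpha,N_\alpha)$, yields a conclusion strong enough to force $|J_1|=|J_2|$ when at least one is infinite (so that the mixed case ``one finite, one infinite'' is excluded rather than silently assumed away). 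Finally, be aware that Proposition~\ref{widehat{NF} respects independence} is itself only proved modulo ``complete...'' in this paper, so your argument, like the paper's, ultimately rests on an unfinished lemma.
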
 

\begin{proof}
complete...
\end{proof}

\end{document}